\theoremstyle{plain}
\newtheorem{theorem}{Theorem}[section]
\newtheorem{lemma}[theorem]{Lemma}
\newtheorem{proposition}[theorem]{Proposition}
\newtheorem{corollary}[theorem]{Corollary}
\theoremstyle{definition}
\newtheorem{remark}[theorem]{Remark}
\numberwithin{equation}{section}
\newcommand{\bo}{{\rm O}}
\newcommand{\ds}{\displaystyle}
\newcommand{\dint}{\ds\int}
\newcommand{\dsum}{\ds\sum}
\newcommand{\dx}[1]{\; {\rm d}#1}
\newcommand{\dm}[1]{{\, \rm d}#1}
\newcommand{\eqskip}{ \vspace*{2mm}\\ }
\newcommand{\R}{\mathbb{R}}
\newcommand{\N}{\mathbb{N}}
\begin{document}

\title[Summation formulae for the perturbed harmonic oscillator]{Summation formula inequalities for eigenvalues of the perturbed
harmonic oscillator}

\author{Pedro Freitas}
\address{Department of Mathematics, Faculty of Human Kinetics \&
Group of Mathematical Physics, Faculdade de Ci\^{e}ncias da Universidade de Lisboa, Campo Grande, Edif\'{\i}cio C6
1749-016 Lisboa, Portugal}
\email{psfreitas@fc.ul.pt}

\author{James B. Kennedy}
\address{Institute of Analysis, University of Ulm, Helmholtzstr.~18, D-89069 Ulm, Germany \&
Institute of Analysis, Dynamics and Modelling, University of Stuttgart, Pfaffenwaldring 57, D-70569 Stuttgart, Germany}

\email{james.kennedy@mathematik.uni-stuttgart.de}

\thanks{\emph{Mathematics Subject Classification} (2010). 34L15 (33C45, 34L20, 34L40, 81Q15)}

\thanks{\emph{Key words and phrases.} Perturbed harmonic oscillator, Eigenvalue sum, Trace formula, Hermite polynomial} 

\thanks{J.B.K. was supported by a fellowship of the Alexander
von Humboldt Foundation, Germany. Both authors were partially supported by FCT's project PEst-OE/MAT/UI0208/2011}



\begin{abstract}
We derive explicit inequalities for sums of eigenvalues of one-dimensional Schr\"{o}dinger operators on the whole line.
In the case of the perturbed harmonic oscillator, these bounds converge to the corresponding trace formula in the limit as the
number of eigenvalues covers the whole spectrum.
\end{abstract}


\maketitle

\section{Introduction}
\label{sec:intro}

Consider the eigenvalue equation
\begin{equation}
\label{general}
-u''(x) + V(x) u(x) = \lambda u(x), \;\; x\in (a,b) \subseteq \R,
\end{equation}
associated with a one-dimensional Schr\"odinger operator $H = -d^2/dx^2+V$, where the potential $V:(a,b) \to \R$, and
the boundary condition if $(a,b) \neq \R$, are chosen such that the spectrum consists of a discrete sequence of eigenvalues $\{\lambda_k\}$.
One possible way of linking the behaviour of this sequence to properties of the potential $V$ is via a regularized trace formula for
the sum of the eigenvalues. The classical example is the formula attributed to Gelfand and Levitan,
which, if we take $(a,b) = (0,\pi)$ with Dirichlet boundary conditions on the endpoints, reads
\begin{equation}
\label{classtr}
	\sum_{k=1}^\infty \left[\lambda_k - k^2 - \frac{1}{\pi}\int_0^\pi V(x)\dx{x} \right] 
	= \frac{1}{2\pi}\int_0^\pi V(x)\dx{x} - \frac{V(0)+V(\pi)}{4}
\end{equation}
(see, e.g., the book \cite{lesa}, also for other similar formulae). Since the values $k^2$ are in fact the eigenvalues of the 
Dirichlet Laplacian, that is, the corresponding Schr\"odinger operator with zero potential, this is a comparison between the 
eigenvalues of the operators $H$ and $H_0:=-d^2/dx^2$.

More recently it has also been shown that an analogous trace formula holds for the eigenvalues of~\eqref{general} on the 
whole line $(a,b) = \R$~\cite{abp,puso}. The comparison case is now provided by the quantum harmonic oscillator
\begin{equation}
\label{qho}
-u''(x) + x^2 u(x) = \lambda u(x), \;\; x\in\R,
\end{equation}
whose eigenvalues are given by $\lambda_k^0 = 2k+1$ for $k \in \N$. Writing the potential in \eqref{general} as $V(x) = x^2 + q(x)$, that is,
as a perturbed harmonic oscillator, if the perturbation $q:\R \to \R$ is small enough in an appropriate sense,
then the eigenvalues of~\eqref{general}, which we denote by $\lambda_k$ for $k \in \N$, satisfy the trace formula
\begin{equation}
\label{photrace}
\dsum_{k=0}^{\infty}\left[ \lambda_{k}-\lambda_{k}^{0}-\frac{\ds 1}
{\ds \pi \sqrt{\lambda_{k}^{0}}}\dint_{\R} q(x) \dx{x}\right] = -\frac{\ds Z_0(1/2)}{\ds \pi}
\dint_{\R} q(x) \dx{x},
\end{equation}
where
\begin{equation}
\label{basespeczeta}
 Z_0(s) =(1-2^{-s})\zeta(s) = \sum_{k=1}^{\infty} \frac{\ds 1}{\ds \left(\lambda_{k}^{0}\right)^{s}}
\end{equation}
is the spectral zeta function associated with \eqref{qho}, the second equality being valid for $\textrm{Re}\,{s}>1$,
and $\zeta(\,.\,)$ is the Riemann zeta function; see~\cite[Theorem~2]{abp} or~\cite[Eq.~(1.12)]{puso}.
We refer to \cite{sapo} for a wide-ranging general survey on the theory of regularized traces.

In a separate paper \cite{frke} we show that formula~\eqref{classtr} is in fact the limit as $n \to \infty$ of a sequence of inequalities
for the (finite) sums of the first $n$ eigenvalues given in terms of the Fourier coefficients of the potential, and that~\eqref{classtr}
can be proved by combining these inequalities with knowledge of the asymptotic behaviour of the eigenvalues and eigenfunctions~\cite{frke}.
In the present paper, which may be viewed as a continuation of~\cite{frke}, we show that a similar family of inequalities is valid
for the perturbed harmonic oscillator assuming that the perturbation $q$ is non-negative and of finite $L^1(\R)$-norm. More precisely, we
shall prove in Theorem~\ref{mainth1} below that there is a sequence of inequalities of the form
\begin{displaymath}
\dsum_{k=0}^{n}\left[ \lambda_{k}-\lambda_{k}^{0}-\frac{\ds 1}
{\ds \pi \sqrt{\lambda_{k}^{0}}}\dint_{\R} q(x) \dx{x}\right]\leq \frac{\ds \chi_{n}}{\ds \pi}
\dint_{\R} q(x) \dx{x}
\end{displaymath}
for all $n \in \N$ if $V(x)=x^2+q(x)$ with $0 \leq q \in L^1(\R)$, where the sequence $\chi_n$, which is given explicitly, depends 
only on properties of the eigenfunctions and 
eigenvalues of the quantum harmonic oscillator~\eqref{qho} and converges to $-Z_0(1/2)$ like $\bo(1/\sqrt{n})$ as $n\to\infty$. 
A similar sequence of bounds will also be shown to hold for a certain class of negative or indefinite potentials (see Theorem~\ref{mainth2}),
and although the corresponding bounding sequence we obtain is larger than $\chi_n$, it is still explicit, and the order of convergence to 
the known trace formula remains $\bo(1/\sqrt{n})$.

These results will be established via test function methods, using for this purpose the eigenfunctions of \eqref{qho} in a 
suitable Rayleigh quotient expression for the eigenvalues of the perturbed harmonic oscillator, and then combining this with properties of 
Hermite polynomials to analyze the resulting expression. We believe one of these properties, namely Lemma~\ref{lemma:hn}, which provides
an upper bound for the function $e^{-x^2}\left[H_{n+1}^{2}(x)-H_{n}(x)H_{n+2}(x)\right]$ to be new and interesting in its own right.

In fact, these results---and the corresponding proofs---differ from those in~\cite{frke} in that for them we do not use a decomposition of
the potential in terms of the eigenfunctions of the unperturbed problem. However, such an approach is also possible in this case and we 
carry it out to obtain a different type of bound; see Theorem~\ref{genpot}. For this particular result we assume that 
$V \in L^2(\R,e^{-x^2}\dm{x})$, that is, that the potential is no longer necessarily a perturbation of $x^2$, but rather more 
generally merely square integrable with respect to the weighted $L^2$-measure most naturally associated with the problem~\eqref{qho}. 
The resulting bounds (which are once again explicit) are expressed in terms of the Fourier-like coefficients of $V$ expanded as a sum of
Hermite polynomials. These are actually stronger than Theorems~\ref{mainth1} and~\ref{mainth2}, as the only inequality used now is
that which arises from the substitution of test functions in the Rayleigh quotient (see Remark~\ref{rem:genpot}(i)). However, now the 
finite sums converging to the the left-hand side of the trace formula~\eqref{photrace} do not appear in a natural way; this will then be 
derived as a simple corollary by writing the potential $V(x)$ as $x^2+q(x)$ and using the Fourier coefficients for $q$ instead.

We also generalize Theorems~\ref{mainth1} and~\ref{mainth2} to obtain bounds on sums of powers of the eigenvalues in Section~\ref{sec:power}.

\section{Schr\"odinger operators on the real line}
\label{sec:prelim}

Throughout this paper we will consider one-dimensional Schr\"odinger operators on the real line, that is, associated with the
equation~\eqref{general} for $x \in \R$, where the potential $V:\R \to \R$ is a locally measurable function on which we will impose
various (and varying) assumptions. We will always assume that $V(x) \to \infty$ as $|x| \to \infty$, so that the operator associated with the problem \eqref{general} considered as an operator on $L^2(\R)$ has discrete spectrum, and we will in general denote the associated eigenvalues by $\lambda_0 < \lambda_1 \leq \ldots \to \infty$.

As is well known, the eigenvalues of the quantum harmonic oscillator \eqref{qho}, which will play the role of our ``default'' problem, 
are given by $\lambda_{k}^{0}=2k+1$ for $k\in\N$, with corresponding eigenfunctions $\psi_{k}(x) = e^{-x^2/2}H_{k}(x)$, which form an orthonormal basis of $L^2(\R)$. Here $H_{k}$ denotes the $k^{\rm th}$ Hermite polynomial (see, e.g., \cite[Ch.~5]{szeg}).

Of particular interest to us will be the perturbed harmonic oscillator
\begin{equation}
\label{pertharm}
-u''(x) + \left[ x^2 + q(x)\right] u(x) = \lambda u(x), \;\; x\in\R,
\end{equation}
which is easily seen to have discrete spectrum if $q \in L^p(\R)$ for some $p \in [1,\infty]$.

For a general potential $V:\R\to\R$, we can characterize the associated eigenvalues via classical variational methods. Denoting by 
$\varphi \in H^1(\R) \cap L^2(\R,V(x)\dm{x})$ an arbitrary test function, we let
\begin{equation}
\label{rayleigh}
	\mathcal{R}[V,\varphi]:= \frac{\dint_{\R} \left(\varphi'(x)\right)^2\dx{x}+\dint_{\R}V(x)\varphi^2(x)\dx{x}}
{\dint_{\R} \varphi^2(x)\dx{x}}
\end{equation}
be the Rayleigh quotient associated with (the Schr\"odinger operator with potential) $V$ at $\varphi$. A standard generalization of
the usual minimax formula for eigenvalues states that if $\varphi_0,\ldots,\varphi_n$ is a collection of $n+1$ such functions orthogonal in $L^2(\R)$, for any $n\in\N$, then
\begin{displaymath}
	\sum_{k=0}^{n} \lambda_k \leq \sum_{k=0}^{n} \mathcal{R}[V,\varphi_k]
\end{displaymath}
(see, e.g., \cite{band}), with equality being achieved when the $\varphi_k$ are the first $n+1$ eigenfunctions. For us the most natural choice of test functions will be the eigenfunctions $\psi_k$ of the quantum harmonic operator.

\section{Bounds for the perturbed harmonic oscillator with a non-negative perturbation}
\label{sec:pho}

In this section we will state and prove our main theorem, obtaining the aforementioned finite version of the trace formula~\eqref{photrace} for the general perturbed harmonic oscillator \eqref{pertharm}.

\begin{theorem}
\label{mainth1}
Let $q$ be a non-negative potential defined on the real line having finite $L^{1}(\R)$ norm. Then the eigenvalues of \eqref{pertharm} satisfy the inequalities
\begin{equation}
\label{maineq}
\dsum_{k=0}^{n}\left[ \lambda_{k}-\lambda_{k}^{0}-\frac{\ds 1}
{\ds \pi \sqrt{\lambda_{k}^{0}}}\dint_{\R} q(x) \dx{x}\right]\leq \frac{\ds \chi_{n}}{\ds \pi}
\dint_{\R} q(x) \dx{x},\;\; n=0,1,\cdots,
\end{equation}
where
\begin{equation}
\label{chi}
\chi_{n} = \left\{
\begin{array}{ll}
\frac{\ds 2n+3}{\ds n+1}\frac{\ds \Gamma\left(\frac{\ds n}{\ds 2}+1\right)}
{\ds \Gamma\left(\frac{\ds n+1}{\ds 2}\right)}-\dsum_{k=0}^{n}\frac{\ds 1}
{\ds \sqrt{\lambda_{k}^{0}}},
& n \mbox{ odd,}\eqskip
(n+1)\frac{\ds \Gamma\left(\frac{\ds n+1}{\ds 2}\right)}{\ds \Gamma\left(\frac{\ds n}{\ds 2}+1\right)}
-\dsum_{k=0}^{n}\frac{\ds 1}{\ds \sqrt{\lambda_{k}^{0}}},
& n \mbox{ even.}
\end{array}
\right.
\end{equation}
Furthermore, $\chi_{n} = -Z_{0}(1/2) + \bo(1/\sqrt{n})$, where $Z_{0}(s) = (1-2^{-s})\zeta(s)$.
\end{theorem}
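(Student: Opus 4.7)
The natural starting point is the variational characterisation recalled in Section~\ref{sec:prelim}, applied with the Hermite eigenfunctions $\psi_k(x)=e^{-x^2/2}H_k(x)$ as the $L^2$-orthogonal test family. Since $-\psi_k''+x^2\psi_k=\lambda_k^0\psi_k$, a short computation gives
\[\mathcal{R}[x^2+q,\psi_k]=\lambda_k^0+\frac{\dint_{\R}q(x)\,e^{-x^2}H_k^2(x)\dx{x}}{\sqrt{\pi}\,2^k\,k!},\]
so that after summing over $k=0,\ldots,n$,
\[\dsum_{k=0}^{n}(\lambda_k-\lambda_k^0)\leq\frac{1}{\sqrt{\pi}}\dint_{\R}q(x)\,e^{-x^2}\dsum_{k=0}^{n}\frac{H_k^2(x)}{2^k\,k!}\dx{x}.\]
The confluent Christoffel--Darboux identity for Hermite polynomials collapses the inner sum to
\[\dsum_{k=0}^{n}\frac{H_k^2(x)}{2^k\,k!}=\frac{H_{n+1}^2(x)-H_n(x)H_{n+2}(x)}{2^{n+1}\,n!},\]
reducing the problem to a single Hermite-type kernel.

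Because $q\geq 0$, any pointwise upper bound on $e^{-x^2}\bigl[H_{n+1}^2(x)-H_n(x)H_{n+2}(x)\bigr]$ transfers directly to the integral, and this is precisely the role of Lemma~\ref{lemma:hn}. A parity computation using $H_{2j}(0)=(-1)^j(2j)!/j!$ and $H_{2j+1}(0)=0$, combined with Legendre's duplication formula, shows that the kernel evaluated at $x=0$ produces exactly the constant $M_n$ for which $M_n/(\sqrt{\pi}\,2^{n+1}\,n!)=\bigl[\chi_n+\sum_{k=0}^{n}(\lambda_k^0)^{-1/2}\bigr]/\pi$, which matches the right-hand side of~\eqref{maineq} after multiplication by $\int_\R q$. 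The main obstacle is therefore the pointwise inequality of Lemma~\ref{lemma:hn} itself---verifying that $e^{-x^2}[H_{n+1}^2-H_nH_{n+2}]$ is globally dominated by its value at the origin---which is particularly delicate in the odd case, where $H_n(0)=H_{n+2}(0)=0$ and the maximality of $x=0$ is not manifest from the formula.

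For the asymptotic statement, set $C_n:=\chi_n+\sum_{k=0}^{n}(\lambda_k^0)^{-1/2}$. Stirling's formula in the form
\[\frac{\Gamma(z+1/2)}{\Gamma(z+1)}=z^{-1/2}\bigl(1-\tfrac{1}{8z}+\bo(z^{-2})\bigr)\]
applied to both parity branches in~\eqref{chi} yields $C_n=\sqrt{2n}+\bo(1/\sqrt{n})$. For the harmonic-oscillator tail, the elementary decomposition
\[\dsum_{k=0}^{n}(2k+1)^{-1/2}=\dsum_{m=1}^{2n+1}m^{-1/2}-2^{-1/2}\dsum_{m=1}^{n}m^{-1/2},\]
combined with the classical expansion $\sum_{m=1}^{N}m^{-1/2}=2\sqrt{N}+\zeta(1/2)+\bo(1/\sqrt{N})$, gives $\sum_{k=0}^{n}(\lambda_k^0)^{-1/2}=\sqrt{2n}+(1-2^{-1/2})\zeta(1/2)+\bo(1/\sqrt{n})$. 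Subtracting, and identifying $(1-2^{-1/2})\zeta(1/2)=Z_0(1/2)$, yields $\chi_n=-Z_0(1/2)+\bo(1/\sqrt{n})$, as claimed.
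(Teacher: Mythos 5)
Your setup---Hermite eigenfunctions as test functions, the Tur\'an/Christoffel--Darboux collapse of $\sum_{k=0}^n H_k^2(x)/(2^kk!)$ to $[H_{n+1}^2(x)-H_n(x)H_{n+2}(x)]/(2^{n+1}n!)$, and the asymptotic analysis of $\chi_n$ via Stirling and the Euler--Maclaurin expansion of $\zeta(1/2)$---is exactly the paper's route, and those parts are correct. However, you have left the genuine heart of the argument unproven: the pointwise bound on $h_n(x)=e^{-x^2}[H_{n+1}^2(x)-H_n(x)H_{n+2}(x)]$ (Lemma~\ref{lemma:hn}) is not something you may simply label as ``the main obstacle'' and pass over, since the theorem cannot be certified without it. The paper proves it by computing $h_n'(x)=-2e^{-x^2}H_n(x)H_{n+1}(x)$ via the recurrences $H_n'=2nH_{n-1}$ and $H_{n+1}=2xH_n-2nH_{n-1}$, integrating from $0$ to $x$, and discarding the manifestly non-positive $x$-dependent terms to get $h_n(x)-h_n(0)\leq \frac{1}{2(n+1)}H_{n+1}^2(0)$.

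Moreover, the target you propose for that missing step is wrong in the odd case: you say one must verify that $h_n$ is ``globally dominated by its value at the origin,'' but for odd $n$ this is \emph{false}. Since $H_n(0)=0$ and $H_{n-1}(0)H_{n+1}(0)<0$ for odd $n$, one has $h_n'(x)=-2e^{-x^2}H_n(x)H_{n+1}(x)>0$ for small $x>0$, so $x=0$ is a local minimum along the positive axis. (Concretely, $h_1(x)=e^{-x^2}(8x^2+4)$ has $h_1(0)=4$ but maximum $8e^{-1/2}\approx 4.85$ at $x=1/\sqrt{2}$.) The correct bound for odd $n$ is $h_n(x)\leq \frac{2n+3}{2n+2}\,H_{n+1}^2(0)$, and it is precisely this extra factor $\frac{2n+3}{2n+2}$ that accounts for the $\frac{2n+3}{n+1}$ appearing in the odd branch of~\eqref{chi}; only for even $n$ (where $H_{n+1}(0)=0$ kills the correction term) does the bound reduce to $h_n(x)\leq h_n(0)$. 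Had you pursued the domination-by-$h_n(0)$ claim for odd $n$, the attempt would have failed and the resulting constant would not have matched $\chi_n$.
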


\begin{remark}
(i) It is essential for our method of proof that $q$ be non-negative. In Theorem~\ref{mainth2} below, we weaken this assumption
and obtain a slightly weaker set of inequalities which nevertheless still converge in the limit to the trace formula~\eqref{photrace}
with the same order of convergence $\bo(1/\sqrt{n})$. It is not clear if the inequalities \eqref{maineq} are true for arbitrary
$q \in L^1(\R)$; the trace formula \eqref{photrace} is itself currently only known to hold under stronger assumptions on $q$: in
\cite{abp} a certain rate of decay of $q$ at infinity is assumed, and in \cite{puso} it is assumed $q$ has compact support. We
remark however that having convergence of order $\bo(1/\sqrt{n})$ is most probably optimal, since this is the rate at which we
have convergence of the sequence whose limit defines $\zeta(1/2)$ (cf.~\eqref{zetaexp} and \eqref{z0}).

(ii) There do not exist corresponding lower bounds for \emph{finite} sums of eigenvalues: for any fixed $n \geq 0$ is it always
possible to find a function $0 \leq q \in L^1(\R)$ for which the left-hand side of \eqref{maineq} is arbitrarily large negative;
see Proposition~\ref{prop:lower} below. However, for a \emph{fixed} potential it is a natural question as to whether we can recover
a lower bound valid in the asymptotic limit. Indeed, it might be possible to extend our result to give a new proof of the trace
formula \eqref{photrace} for a different class of (non-negative) potentials $q$ from those considered
in~\cite{abp,puso}, namely $q \in L^1(\R)$. The idea would be to argue as in \cite{dikii} (or \cite{frke}), to show
that the degree of ``error'' which arises from using the eigenfunctions $\psi_k$ of the unperturbed problem as test
functions becomes asymptotically small as $k \to \infty$: denoting by $\varphi_k$ the eigenfunction associated with
$\lambda_k$ (corresponding to the potential $V(x)=x^2+q(x)$), we see that the trace formula holds whenever
\begin{equation}
\label{rayleigh-lim}
	\lim_{n \to \infty} \dsum_{k=0}^{n} \left(\mathcal{R}[x^2+q(x),\varphi_k] - \mathcal{R}[x^2+q(x),\psi_k]\right) = 0,
\end{equation}
since by definition $\mathcal{R}[x^2+q(x),\varphi_k]=\lambda_k$. We can rewrite \eqref{rayleigh-lim} as a type of ``change of basis'' formula
\begin{displaymath}
	\lim_{n \to \infty} \dsum_{k=0}^{n} \big(\langle \varphi_k, H\varphi_k \rangle - \langle \psi_k, H\psi_k\rangle\big)=0,
\end{displaymath}
where $H:  D(H)\subset L^2(\R) \to L^2(\R)$ is the operator associated with the potential $x^2+q(x)$.
We \emph{expect} this to hold whenever the asymptotics for $\lambda_k$ and $\varphi_k$ are similar enough to those of
$\lambda_k^0$ and $\psi_k$, respectively, when $k \to \infty$. This is, however, likely to be a difficult problem,
and we shall not attempt an investigation of it here.
\end{remark}

For notational convenience, for $n\geq 0$ we define
\begin{equation}
\label{omega}
\omega_n:=\chi_n+\dsum_{k=0}^n \frac{1}{\sqrt{\lambda_k^0}}= \left\{
\begin{array}{ll}
\frac{\ds 2n+3}{\ds n+1}\frac{\ds \Gamma\left(\frac{\ds n}{\ds 2}+1\right)}
{\ds \Gamma\left(\frac{\ds n+1}{\ds 2}\right)},
& n \mbox{ odd,}\eqskip
(n+1)\frac{\ds \Gamma\left(\frac{\ds n+1}{\ds 2}\right)}{\ds \Gamma\left(\frac{\ds n}{\ds 2}+1\right)},
& n \mbox{ even,}
\end{array}
\right.
\end{equation}
and we also set $\omega_{-1}:= 0$.

\begin{proof}[Proof of Theorem~\ref{mainth1}]
Using the first $n+1$ eigenfunctions of the unperturbed harmonic oscillator \eqref{qho}, given by $\psi_k(x) =e^{-x^2/2}  H_k(x)$, 
$k=0,\ldots,n$, as test functions in the Rayleigh quotient \eqref{rayleigh} for $V(x)=x^ 2+q(x)$ yields
\begin{equation}
\label{basicsum}
\begin{array}{lll}
\dsum_{k=0}^{n} \lambda_{k} & \leq & \dsum_{k=0}^{n} \frac{\ds \dint_{\R}\left[\frac{\ds d}{\ds dx}
\left[e^{-x^2/2}H_{k}(x)\right]\right]^2
 +
\left[x^2+q(x)\right]e^{-x^2}H_{k}^{2}(x)\dx{x}}{\ds \dint_{\R}e^{-x^2}H_{k}^{2}(x)\dx{x}}\eqskip
& = & \dsum_{k=0}^{n} \lambda_{k}^{0} + \dint_{\R} e^{-x^2} q(x)\dsum_{k=0}^{n}\frac{\ds 1}
{\ds 2^k k!\sqrt{\pi}}H_{k}^{2}(x)\dx{x}
\end{array}
\end{equation}
From basic properties of Hermite polynomials we have the identity
\begin{equation}
\label{sumH}
\dsum_{k=0}^{n}\frac{\ds 1}{\ds 2^k k!}H_{k}^{2}(x)=\frac{\ds 1}{\ds 2^{n+1}n!}\left[ H_{n+1}^{2}(x)
-H_{n}(x)H_{n+2}(x)\right].
\end{equation}
This arises in the context of Tur\'an's inequality for Hermite polynomials (cf. \cite[p. 404]{szeg1}), and can easily be derived directly
by induction in $n$---see also, for instance,~\cite[p.~106]{szeg}. By using the estimate of the function
\begin{equation}
\label{h}
h_{n}(x) := e^{-x^2}\left[H_{n+1}^{2}(x)-H_{n}(x)H_{n+2}(x)\right]
\end{equation}
given in Lemma~\ref{lemma:hn} below in \eqref{sumH} and inserting this into \eqref{basicsum}, we obtain
\begin{multline}
\label{comb}
\dsum_{k=0}^{n} \lambda_{k} \leq \dsum_{k=0}^{n} \lambda_{k}^{0} + \dint_{\R} e^{-x^2} q(x)\dsum_{k=0}^{n}\frac{\ds 1}
{\ds 2^k k!\sqrt{\pi}}H_{k}^{2}(x)\dx{x} \\
\leq \dsum_{k=0}^{n} \lambda_{k}^{0} + \frac{\omega_n}{\pi}\dint_{\R} q(x)\dx{x},
\end{multline}
which upon rearranging yields \eqref{maineq}.

We now give the (routine) proof that $\chi_n = -Z_0(1/2) + \bo(1/\sqrt{n})$ as $n \to \infty$. We first note that
\begin{equation}
\label{zetaexp}
\zeta(s) = \sum_{k=1}^n k^{-s}+ s\int_n^\infty \frac{\lfloor x\rfloor - x + \frac{1}{2}}{x^{s+1}} \dx{x} + 
	\frac{n^{1-s}}{s-1} - \frac{1}{2n^{s}},
\end{equation}
valid for $s>0$ (see \cite{titc}, Eq.~(3.5.3), pp.~49-50). Setting $s=1/2$ and passing to the limit as $n \to \infty$,
this means we can write 
\begin{equation}
\label{z0}
-Z_0(1/2) = -\left(1-\frac{1}{\sqrt{2}}\right) \zeta(1/2)
= \left(1-\frac{1}{\sqrt{2}}\right)\lim_{n \to \infty} a_n,
\end{equation}
where for ease of notation we have set
\begin{equation}
\label{an}
a_n:= 2\sqrt{n} - \sum_{k=1}^n\frac{1}{\sqrt{k}}
\end{equation}
for $n \geq 1$. Now, recalling that $\lambda_k^0 = 2k+1$ for $k \in \N$, we have
\begin{equation}
\label{eulersum}
\chi_n = \omega_n - \sum_{k=0}^n \frac{1}{\sqrt{2k+1}} = \omega_n - \left(1-\frac{1}{\sqrt{2}}\right)\sum_{k=1}^{n} \frac{1}{\sqrt{k}} - \sum_{k=n+1}^{2n+1} \frac{1}{\sqrt{k}};
\end{equation}
we wish to show that this converges to $-Z_0(1/2)$ as $n\to\infty$. We first establish that
\begin{equation}
\label{omegaconv}
\omega_n = \sqrt{2n} + \bo\left(\frac{1}{\sqrt{n}}\right),
\end{equation}
using the following asymptotics for the quotient of two gamma functions (see~\cite{abrasteg}, formula $6.1.47$, for instance):
\begin{equation}
\label{gammaexp}
 \frac{\Gamma\left(z+\frac{1}{2}\right)}{\Gamma(z)} = \sqrt{z} + \bo\left(\frac{\ds 1}{\ds \sqrt{z}}\right),
\end{equation}
for large $z$. For $n$ odd we obtain
\begin{displaymath}
\omega_n = \frac{2n+3}{n+1} \left[\sqrt{\frac{n+1}{2}} + \bo\left(\frac{1}{\sqrt{n}}\right)\right]
= \sqrt{2n} + \bo\left(\frac{1}{\sqrt{n}}\right).
\end{displaymath}
A similar calculation when $n$ is even gives
\begin{displaymath}
\omega_n = 2\, \frac{\Gamma\left(\frac{n+3}{2}\right)}{\Gamma\left(\frac{n}{2}+1\right)}
= 2\left[\sqrt{\frac{n}{2}+1} + \bo\left(\frac{1}{\sqrt{n}}\right)\right],
\end{displaymath}
proving \eqref{omegaconv}. Next, we observe that
\begin{displaymath}
\sum_{k=n+1}^{2n+1} \frac{1}{\sqrt{k}} = 2(\sqrt{2}-1)\sqrt{n} + \bo\left(\frac{1}{\sqrt{n}}\right)
\end{displaymath}
for large $n$, as can be seen, for example, by noting that
\begin{displaymath}
\dint_{n+1}^{2n+2} \frac{1}{\sqrt{x}}\dx{x} \leq \dsum_{k=n+1}^{2n+1}\frac{1}{\sqrt{k}} \leq
\dint_{n+1}^{2n+2} \frac{1}{\sqrt{x-1}}\dx{x}
\end{displaymath}
and evaluating the integrals. Substituting these two estimates into~\eqref{eulersum} yields
\begin{displaymath}
\begin{aligned}
\chi_n &= \sqrt{2n} - \left(1-\frac{1}{\sqrt{2}}\right)\dsum_{k=1}^{n} \frac{1}{\sqrt{k}} - 2\left(\sqrt{2}-1\right)\sqrt{n} +
\bo\left(\frac{1}{\sqrt{n}}\right)\\
&= \left(1-\frac{1}{\sqrt{2}}\right)a_n + \bo\left(\frac{1}{\sqrt{n}}\right).
\end{aligned}
\end{displaymath}
Letting $s$ equal $1/2$ in~\eqref{zetaexp} and using $-1< \lfloor x\rfloor - x\leq 0$ we obtain
\begin{displaymath}
 -\frac{1}{\sqrt{n}}< \zeta(1/2) + a_{n} \leq 0,
\end{displaymath}
from which it follows that
\begin{displaymath}
 \chi_n = -Z_{0}(1/2)+\bo\left(\frac{1}{\sqrt{n}}\right),
\end{displaymath}
as desired.
\end{proof}

\begin{lemma}
\label{lemma:hn}
The function $h_{n}$ defined by \eqref{h} is positive and satisfies
\begin{displaymath}
h_{n}(x)\leq \left\{\begin{array}{ll}
\frac{\ds 4^{n+1}}{\ds 2\pi}\frac{\ds 2n+3}{\ds n+1}\Gamma^{2}\left(\frac{\ds n}{\ds 2}+1\right), & n \mbox{ odd,}\eqskip\eqskip
\frac{\ds 4^{n+1}}{\ds 2\pi}(n+1) \Gamma^{2}\left(\frac{\ds n+1}{\ds 2}\right), & n \mbox{ even.}
            \end{array}
\right.
\end{displaymath}
\end{lemma}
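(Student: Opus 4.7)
Positivity is immediate from identity~\eqref{sumH}: it exhibits $h_n(x)/(2^{n+1}n!)$ as $e^{-x^2}$ times a strictly positive sum of squares (the $k=0$ term contributes $H_0^2=1$). The plan for the upper bound is first to compute $h_n'$ in closed form, then to locate the maxima of $h_n$, and finally to estimate $h_n$ at those points by means of an energy identity for a Hermite function.

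Using the recurrences $H_{n+1}=2xH_n-2nH_{n-1}$ and $H_n'=2nH_{n-1}$, and exploiting the fact that the Tur\'an-like polynomial $T_{n+1}:=H_{n+1}^2-H_nH_{n+2}$ satisfies the recursion $T_{n+1}=2nT_n+2H_n^2$ (easily verified from the recurrence), a routine manipulation yields
\[
h_n'(x)=-2\,e^{-x^2}H_n(x)H_{n+1}(x).
\]
Since $h_n(x)\to 0$ as $|x|\to\infty$ and the zeros of $H_n$ and $H_{n+1}$ interlace, a sign analysis shows that all local, and hence the global, maxima of $h_n$ occur at the zeros $y_j$ of $H_{n+1}$. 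At such a point the recurrence gives $H_{n+2}(y_j)=-2(n+1)H_n(y_j)$, so that
\[
h_n(y_j)=2(n+1)\,e^{-y_j^2}H_n^2(y_j)=2(n+1)\,\psi_n^2(y_j),
\]
and the problem reduces to bounding $\psi_n^2$ at the zeros of $H_{n+1}$.

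For this final step I would introduce the auxiliary energy function
\[
E_n(x):=\psi_{n+1}'(x)^2+(2n+3-x^2)\,\psi_{n+1}^2(x),
\]
designed so that the Schr\"odinger identity $\psi_{n+1}''=(x^2-(2n+3))\psi_{n+1}$ forces $E_n'(x)=-2x\,\psi_{n+1}^2(x)$, making $E_n$ maximal at $x=0$. At a zero $y_j$ of $H_{n+1}$ (equivalently of $\psi_{n+1}$), the identity $\psi_{n+1}'=x\psi_{n+1}-\psi_{n+2}$ together with $\psi_{n+2}(y_j)=-2(n+1)\psi_n(y_j)$ gives $\psi_{n+1}'(y_j)^2=4(n+1)^2\psi_n^2(y_j)$, so $4(n+1)^2\psi_n^2(y_j)=E_n(y_j)\le E_n(0)$. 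Dividing through,
\[
\max_x h_n(x)\le\frac{E_n(0)}{2(n+1)}=\frac{H_{n+2}^2(0)+(2n+3)H_{n+1}^2(0)}{2(n+1)}.
\]
Exactly one of $H_{n+1}(0)$, $H_{n+2}(0)$ vanishes according to the parity of $n$; inserting $H_{2k}(0)=(-1)^k(2k)!/k!$ in the surviving term and converting the factorials by Legendre's duplication formula $\Gamma(2z)=2^{2z-1}\Gamma(z)\Gamma(z+\tfrac12)/\sqrt{\pi}$ then produces the gamma-function expressions asserted in the lemma.

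The step I expect to be the main obstacle is choosing the right energy function. The more obvious candidate $\psi_n'(x)^2+(2n+1-x^2)\psi_n^2(x)$, built directly from $\psi_n$, gives the sharp bound when $n$ is even but is weaker by a factor of $(2n+2)^2/[(2n+1)(2n+3)]$ when $n$ is odd. The key insight is that by shifting the index up to $\psi_{n+1}$, which itself vanishes at $y_j$, one exploits this vanishing optimally and recovers the correct constant uniformly in the parity of $n$.
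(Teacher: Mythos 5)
Your argument is correct and reproduces the lemma's constants exactly, but the second half is less different from the paper's proof than it appears. The paper also begins by computing $h_n'(x) = -2e^{-x^2}H_n(x)H_{n+1}(x)$; it then writes $2H_n H_{n+1} = \tfrac{1}{2(n+1)}\tfrac{d}{dx}H_{n+1}^2$, integrates from $0$ to $x$ and integrates by parts to obtain
\begin{displaymath}
h_n(x)-h_n(0) = -\frac{1}{2(n+1)}\left[e^{-x^2}H_{n+1}^{2}(x)-H_{n+1}^{2}(0)+2\int_0^x t e^{-t^2}H_{n+1}^2(t)\dx{t}\right],
\end{displaymath}
and simply discards the two $x$-dependent terms (which work in the right direction), arriving at $h_n(x)\le h_n(0)+H_{n+1}^2(0)/(2(n+1))$. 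This is exactly your energy-function argument in integrated form: using $\psi_{n+1}'=x\psi_{n+1}-\psi_{n+2}$ and $H_{n+2}=2xH_{n+1}-2(n+1)H_n$ one checks that
\begin{displaymath}
E_n(x)=\psi_{n+1}'(x)^2+\left(2n+3-x^2\right)\psi_{n+1}^2(x)=2(n+1)h_n(x)+\psi_{n+1}^2(x),
\end{displaymath}
so the relation $E_n'=-2x\psi_{n+1}^2$ is precisely the derivative of the paper's identity, and $E_n(0)/(2(n+1))$ coincides with the paper's bound $h_n(0)+H_{n+1}^2(0)/(2(n+1))$. The only genuine divergence is that you first localize the maxima of $h_n$ at the zeros of $H_{n+1}$ (via interlacing and a sign count on $h_n'$) so that the term $\psi_{n+1}^2$ vanishes identically at the points you evaluate, whereas the paper discards it using only $\psi_{n+1}^2\ge 0$; the two devices compensate exactly and the final bounds agree. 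Your version costs the extra (correct, but not free) step of classifying the critical points; what it buys is the explicit identification of where the maximum sits, which the paper's phrasing hides. The remaining ingredients --- positivity from \eqref{sumH}, the values $H_{2k}(0)=(-1)^k(2k)!/k!$, and the duplication formula --- all check out.
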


\begin{proof}
Positivity of $h_{n}$ is a direct consequence of \eqref{sumH}. Taking derivatives in $x$ and using the property $H_{n}'(x) = 2n H_{n-1}(x)$ yields
\begin{displaymath}
\begin{array}{lll}
h_{n}'(x) & = & e^{-x^2}\left\{ -2x\left[H_{n+1}^{2}(x)-H_{n}(x)H_{n+2}(x)\right]\right.\eqskip
& & \hspace*{0.5cm} \left.+ 2H_{n+1}(x)H_{n+1}'(x)-H_{n}'(x)H_{n+2}(x)-H_{n}(x)H_{n+2}'(x)\right\}\eqskip
& = & e^{-x^2}\left\{ 2H_{n+1}(x)\left[-x H_{n+1}(x)+2(n+1)H_{n}(x)\right] \right. \eqskip
& & \hspace*{0.5cm} +2x H_{n}(x)H_{n+2}(x)-2nH_{n-1}(x) H_{n+2}(x) \eqskip
& & \hspace*{1cm} \left.-2(n+2)H_{n}(x)H_{n+1}(x)\right\}\eqskip
& = & e^{-x^2}\left\{ 2H_{n+1}(x)\left[-x H_{n+1}(x)+n H_{n}(x)\right]\right.\eqskip
& & \hspace*{0.5cm} \left. +2 H_{n+2}\left[x H_{n}(x)-nH_{n-1}(x)\right]\right\}.
\end{array}
\end{displaymath}
Using the identity $H_{n+1}(x) = 2x H_{n}(x)-2nH_{n-1}(x)$ in the above expression yields
\begin{displaymath}
h_{n}'(x) = -2e^{-x^2}H_{n}(x)H_{n+1}(x),
\end{displaymath}
which integrated between zero and $x$ becomes
\begin{displaymath}
\begin{array}{lll}
h_{n}(x)-h_{n}(0) & = & -2 \dint_{0}^{x} e^{-t^2}H_{n}(t)H_{n+1}(t)\dx{t}\eqskip
& = & -\frac{\ds 1}{\ds 2(n+1)}\dint_{0}^{x} e^{-t^2}\frac{\ds d}{\ds dt} H_{n+1}^{2}(t)\dx{t}\eqskip
& = & -\frac{\ds 1}{\ds 2(n+1)} \left[ e^{-x^2}H_{n+1}^{2}(x)-H_{n+1}^{2}(0)+ 2\dint_{0}^{x}
t e^{-t^2}H_{n+1}^{2}(t)\dx{t}\right].
\end{array}
\end{displaymath}
Noting that the terms which depend on $x$ on the right-hand side above are non-positive, we obtain
\begin{equation}\label{hnbound}
h_{n}(x)-h_{n}(0) \leq \frac{\ds 1}{\ds 2(n+1)}H_{n+1}^{2}(0).
\end{equation}
For odd $n$, $h_{n}(0) = H_{n+1}^{2}(0)$ and the above becomes
\begin{displaymath}
\begin{array}{lll}
h_{n}(x) & \leq & \frac{\ds 2n+3}{\ds 2n+2} H_{n+1}^{2}(0)\eqskip
& = & \frac{\ds 2n+3}{\ds 2n+2} \Gamma^{2}(n+2)\Gamma^{-2}\left(\frac{\ds n+3}{\ds 2}\right)\eqskip
& = & \frac{\ds 4^{n+1}}{\ds 2\pi}\frac{\ds 2n+3}{\ds n+1}\Gamma^{2}\left(\frac{\ds n}{\ds 2}+1\right).
\end{array}
\end{displaymath}
For even values of $n$ the right-hand side of \eqref{hnbound} vanishes and we obtain
\begin{displaymath}
h_{n}(x) \leq h_{n}(0) = \frac{\ds 4^{n+1}}{\ds 2\pi}(n+1)\Gamma^{2}\left(\frac{\ds n+1}{\ds 2}\right).
\end{displaymath}
\end{proof}

We will now construct an example showing that no lower bound of the same form as in Theorem~\ref{mainth1} is possible. 

\begin{proposition}
\label{prop:lower}
For any $n \geq 0$ and any $N>0$, there exists $0 \leq q \in L^1(\R)$ such that
\begin{displaymath}
	\dsum_{k=0}^{n}\left[ \lambda_{k}-\lambda_{k}^{0}-\frac{\ds 1} {\ds \pi \sqrt{\lambda_{k}^{0}}}\dint_{\R} 
	q(x) \dx{x}\right]\leq -N.
\end{displaymath}
\end{proposition}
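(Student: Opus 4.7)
The strategy is to choose a non-negative $q$ whose $L^{1}$-mass is concentrated far from the origin, so that the individual eigenvalue shifts $\lambda_{k}-\lambda_{k}^{0}$ remain negligible relative to $\|q\|_{L^{1}(\R)}$ for every $k$ in the fixed finite range $0\le k\le n$, whereas the subtracted term $\|q\|_{L^{1}}/(\pi\sqrt{\lambda_{k}^{0}})$ can simultaneously be driven to $+\infty$. The point is that the $\psi_{k}$ decay like $x^{k}e^{-x^{2}/2}$ at infinity, so shifting the support of $q$ to the right essentially decouples it from the first $n+1$ unperturbed eigenfunctions.

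Concretely, I would take $q(x)=c\,\mathbf{1}_{[R,R+1]}(x)$ with $c,R>0$ to be chosen. Then $\|q\|_{L^{1}(\R)}=c$, and $q\in L^{1}(\R)$ with $q\ge 0$. Applying the test-function estimate~\eqref{basicsum} from the proof of Theorem~\ref{mainth1}, using the $\psi_{k}$, $k=0,\dots,n$, as test functions, would give
\begin{equation*}
\sum_{k=0}^{n}(\lambda_{k}-\lambda_{k}^{0})\;\le\;c\int_{R}^{R+1}\sum_{k=0}^{n}\frac{e^{-x^{2}}H_{k}^{2}(x)}{2^{k}k!\sqrt{\pi}}\,dx\;=:\;c\,\eta_{n}(R).
\end{equation*}
Since each $e^{-x^{2}}H_{k}^{2}(x)$ decays like $x^{2k}e^{-x^{2}}$ at infinity and the sum is finite, $\eta_{n}(R)\to 0$ as $R\to\infty$. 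I can therefore fix $R=R(n)$ so large that
\begin{equation*}
\eta_{n}(R)\;\le\;\frac{1}{2\pi}\sum_{k=0}^{n}\frac{1}{\sqrt{2k+1}}.
\end{equation*}

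Substituting this back into the quantity of interest yields
\begin{equation*}
\sum_{k=0}^{n}\left[\lambda_{k}-\lambda_{k}^{0}-\frac{c}{\pi\sqrt{\lambda_{k}^{0}}}\right]\;\le\;c\,\eta_{n}(R)-\frac{c}{\pi}\sum_{k=0}^{n}\frac{1}{\sqrt{2k+1}}\;\le\;-\frac{c}{2\pi}\sum_{k=0}^{n}\frac{1}{\sqrt{2k+1}},
\end{equation*}
whose right-hand side is a strictly negative multiple of $c$ depending only on $n$. Choosing $c=c(n,N)$ sufficiently large then produces the bound $\le-N$, as required.

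There is essentially no obstacle beyond what is already in the proof of Theorem~\ref{mainth1}: the only new input is the pointwise decay at infinity of the finite sum $\sum_{k=0}^{n}e^{-x^{2}}H_{k}^{2}(x)$, which is immediate. If desired, one could make $R$ explicit (say $R\asymp\sqrt{n}\log n$) using the standard Plancherel--Rotach tails for Hermite functions, but this refinement plays no role in the statement.
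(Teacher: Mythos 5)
Your proof is correct, but it takes a genuinely different route from the paper's. The paper concentrates the mass of $q$ at the \emph{origin}, taking $q_\delta = K\delta^{-1}\chi_{[-\delta/2,\delta/2]}$; since $H_0\equiv 1$ does not vanish there, it must abandon the first $n+1$ eigenfunctions and instead use the odd-indexed ones $\psi_1,\psi_3,\ldots,\psi_{2n+1}$ as test functions (all odd Hermite polynomials vanish at $0$), which introduces an extra non-negative constant $C_n = \sum_{k=0}^{n}\lambda_{2k+1}^0 - \sum_{k=0}^{n}\lambda_k^0$ that then has to be absorbed by taking the total mass $K$ large. You instead push the support of $q$ out to infinity and keep the standard test functions $\psi_0,\ldots,\psi_n$ from \eqref{basicsum}, letting the Gaussian decay of $e^{-x^2}H_k^2(x)$ kill the perturbation term $\eta_n(R)$; no auxiliary constant appears and no change of test functions is needed. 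The two-parameter structure is the same in both arguments (one parameter makes the positive contribution negligible, the other scales the mass to beat $N$), and both produce bounded, compactly supported examples, but your version is somewhat more elementary: it uses only the universal decay of Hermite functions at infinity rather than the special vanishing of the odd ones at the origin, and it reuses \eqref{basicsum} verbatim. One could say the paper's family of potentials concentrates to a point mass at $0$ while yours escapes to infinity; either degeneration demonstrates equally well why no lower bound companion to \eqref{maineq} can hold.
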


Before giving the proof, we note two points: firstly, that there \emph{exists} a potential for which the corresponding first $n$ eigenvalues are arbitrarily large negative is trivial; the key point here is that $q$ satisfies the same assumptions as in Theorem~\ref{mainth1}. Secondly, the sum here has to be regularized, since for any $q \geq 0$ we automatically have $\lambda_k \geq \lambda_k^0$ for all $k \geq 0$.

\begin{proof}
Fix $n \geq 0$ and $N>0$. If we use the $n+1$ functions $\psi_k(x)=e^{-x^2/2}H_k(x)$ for $k=1,3,\ldots,2n+1$, as test functions in the Rayleigh quotient, then for any $0 \leq q \in L^1(\R)$ we obtain after a certain amount of rearranging
\begin{multline}
\label{neg-test}
	\dsum_{k=0}^{n}\left[ \lambda_{k}-\lambda_{k}^{0}-\frac{\ds 1} {\ds \pi \sqrt{\lambda_{k}^{0}}}\dint_{\R} 
	q(x) \dx{x}\right] \\ \leq C_n + \dsum_{k=0}^{n}\dint_{\R} e^{-x^2}q(x) \frac{H_{2k+1}^2(x)}{2^{k+1}(2k+1)!
	\sqrt{\pi}}\dx{x}- \dsum_{k=0}^{n}\frac{\ds 1} {\ds \pi \sqrt{\lambda_{k}^{0}}}\dint_{\R} q(x) \dx{x},
\end{multline}
where the constant
\begin{displaymath}
	C_n := \dsum_{k=0}^{n} \lambda_{2k+1}^0 - \dsum_{k=0}^{n} \lambda_k^0 \geq 0
\end{displaymath}
depends only on $n \geq 0$. We will show that we can find $q$ for which the first sum on the right-hand side of \eqref{neg-test} is arbitrarily small, while the second sum is arbitrarily large. The idea is to choose $q$ to have support in a very small neighbourhood of $0$ and use that all odd Hermite polynomials $H_{2k+1}$ satisfy $H_{2k+1}(0)=0$ (and hence are very small close to $0$). We start by fixing $K=K(n,N)>0$ large enough that
\begin{equation}
\label{k-choice}
	C_n + 1 - K\dsum_{k=0}^{n}\frac{\ds 1} {\ds \pi \sqrt{\lambda_{k}^{0}}} < -N
\end{equation}
and for given $\delta>0$, to be specified later, we choose $q_\delta(x):=K\delta^{-1}\chi_\delta(x)$, where $\chi_\delta$ is the indicator function of the set $[-\delta/2,\delta/2]$. Then obviously $q_\delta \geq 0$ has $L^1$-norm equal to $K$ for any $\delta>0$. Since, as mentioned, $H_{2k+1}^2(0)=0$ for all $k=0,\ldots,n$, and $H_{2k+1}^2$ is obviously continuous, for any $\varepsilon>0$, there exists $\delta=\delta(\varepsilon, n) >0$ such that
\begin{displaymath}
	0 \leq \frac{e^{-x^2} H_{2k+1}^2(x)}{2^{k+1}(2k+1)!\sqrt{\pi}} < \varepsilon
\end{displaymath}
for all $x \in [-\delta/2,\delta/2]$ and all $k=0,\ldots,n$. It follows that for this $\delta$, we have
\begin{displaymath}
	\dsum_{k=0}^{n}\dint_{\R} e^{-x^2}q_\delta (x) \frac{H_{2k+1}^2(x)}{2^{k+1}(2k+1)!
	\sqrt{\pi}}\dx{x} < \varepsilon(n+1) < 1,
\end{displaymath}
if we choose $\varepsilon < 1/(n+1)$. Inserting this estimate together with \eqref{k-choice} into \eqref{neg-test} yields the proposition.
\end{proof}

\section{Bounds for the perturbed harmonic oscillator with an integrable perturbation}
\label{sec:neg}

Here we generalize Theorem~\ref{mainth1} to allow for a class of perturbations $q$ which may now take on negative values. 
Although the resulting estimate is not quite as tight as in Theorem~\ref{mainth1}, we still have convergence to the trace formula~\eqref{photrace} at the same rate as before.

\begin{theorem}
\label{mainth2}
Given the function $q \in L^1(\R)$, suppose that there exists a non-negative constant $q_m$ for which $q(x) + q_m e^{-x^2}$ is non-negative for almost all real values of $x$. Then the eigenvalues of the corresponding perturbed harmonic oscillator~\eqref{pertharm} satisfy the inequalities
\begin{equation}
\label{negeq}
\dsum_{k=0}^{n}\left[ \lambda_{k}-\lambda_{k}^{0}-\frac{\ds 1}
{\ds \pi \sqrt{\lambda_{k}^{0}}}\dint_{\R} q(x) \dx{x}\right]\leq \frac{\ds \chi_{n}}{\ds \pi}
\dint_{\R} q(x) \dx{x} + \varepsilon_{n} \frac{q_m}{\sqrt{\pi}}
\end{equation}
for $n = 0,1,\ldots$, where
\begin{equation}
\label{negasymp}
\varepsilon_{n} = \omega_n - \sqrt{2}\,\frac{\Gamma\left(n+\frac{3}{2}\right)}{\Gamma(n+1)} \geq 0
\end{equation}
and $\chi_n$ and $\omega_n$ are given by~\eqref{chi} and~\eqref{omega}, respectively. Moreover, $\varepsilon_{n}= \bo(1/\sqrt{n})$ as $n \to \infty$.
\end{theorem}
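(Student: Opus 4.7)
\medskip

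The plan is to reduce to the situation of Theorem~\ref{mainth1} by splitting the (possibly sign-changing) perturbation as $q(x) = \tilde q(x) - q_m e^{-x^2}$, where $\tilde q(x) := q(x) + q_m e^{-x^2} \geq 0$ almost everywhere. One cannot simply apply Theorem~\ref{mainth1} to $\tilde q$, because this would change the operator. Instead I would repeat the Rayleigh-quotient step \eqref{basicsum} verbatim (using the Hermite eigenfunctions $\psi_k$ as test functions), obtaining
\begin{displaymath}
\sum_{k=0}^{n} \lambda_{k} \leq \sum_{k=0}^{n} \lambda_{k}^{0} + \int_{\R} e^{-x^2} q(x)\, S_n(x)\dx{x},
\qquad S_n(x) := \frac{1}{\sqrt{\pi}}\sum_{k=0}^{n}\frac{H_k^2(x)}{2^{k}k!},
\end{displaymath}
and then use the splitting $q=\tilde q-q_m e^{-x^2}$ inside the remainder integral.

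For the $\tilde q$-piece, the non-negativity of $\tilde q$ allows the pointwise estimate of Lemma~\ref{lemma:hn} (exactly as in \eqref{comb}) to yield
\begin{displaymath}
\int_{\R}e^{-x^2}\tilde q(x)\,S_n(x)\dx{x}\leq \frac{\omega_n}{\pi}\int_{\R}\tilde q(x)\dx{x}
= \frac{\omega_n}{\pi}\int_{\R}q(x)\dx{x} + \frac{\omega_n q_m}{\sqrt{\pi}},
\end{displaymath}
using $\int_{\R} e^{-x^2}\dx{x}=\sqrt{\pi}$. The remaining piece $-q_m\int_{\R} e^{-2x^2}S_n(x)\dx{x}$ must be computed exactly. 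The natural route is via the Mehler-type generating function
\begin{displaymath}
\sum_{k=0}^{\infty}\frac{H_k^2(x)}{2^{k}k!}t^{k} = \frac{1}{\sqrt{1-t^2}}\exp\!\left(\frac{2x^2 t}{1+t}\right),
\end{displaymath}
which after multiplication by $e^{-2x^{2}}$ and integration over $\R$ reduces to a standard Gaussian; expanding $(1-t)^{-1/2}$ and matching coefficients of $t^k$ gives $\int_{\R}e^{-2x^{2}}H_{k}^{2}(x)\dx{x}=\sqrt{\pi/2}\,(2k)!/(2^{k}k!)$, and the telescoping identity $\sum_{k=0}^{n}\binom{2k}{k}/4^{k}=(2n+1)!/(4^{n}(n!)^{2})$ then yields the closed form
\begin{displaymath}
\int_{\R}e^{-2x^{2}}S_{n}(x)\dx{x} = \sqrt{\frac{2}{\pi}}\,\frac{\Gamma(n+\tfrac{3}{2})}{\Gamma(n+1)}.
\end{displaymath}
Collecting the two contributions, the extra term is exactly $(q_m/\sqrt{\pi})\bigl[\omega_n-\sqrt{2}\,\Gamma(n+3/2)/\Gamma(n+1)\bigr] = \varepsilon_n q_m/\sqrt{\pi}$, and rearranging (using $\chi_n=\omega_n-\sum_{k=0}^{n}1/\sqrt{\lambda_k^{0}}$) gives precisely \eqref{negeq}.

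Two small verifications remain. First, non-negativity of $\varepsilon_n$ follows for free from the proof: since Lemma~\ref{lemma:hn} implies $e^{-x^{2}}S_n(x)\leq \omega_n/\pi$ pointwise, integrating this bound against $e^{-x^{2}}$ and comparing with the exact value of $\int_{\R}e^{-2x^{2}}S_n(x)\dx{x}$ just obtained produces the inequality $\sqrt{2}\,\Gamma(n+3/2)/\Gamma(n+1)\leq \omega_n$. Second, for the asymptotics, the estimate \eqref{omegaconv} already gives $\omega_n=\sqrt{2n}+\bo(1/\sqrt{n})$, and the Gamma-ratio expansion \eqref{gammaexp} applied to $\Gamma(n+3/2)/\Gamma(n+1)$ gives $\sqrt{2}\,\Gamma(n+3/2)/\Gamma(n+1)=\sqrt{2(n+1/2)}+\bo(1/\sqrt{n})=\sqrt{2n}+\bo(1/\sqrt{n})$; subtracting yields $\varepsilon_n=\bo(1/\sqrt{n})$ as claimed.

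The one step that requires genuine work is the exact evaluation of $\int_{\R}e^{-2x^{2}}S_n(x)\dx{x}$; the Rayleigh-quotient reduction and the final asymptotic check are otherwise routine consequences of the machinery already established in Section~\ref{sec:pho}.
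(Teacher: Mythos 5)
Your proposal is correct and follows essentially the same route as the paper: the same splitting $q=(q+q_m e^{-x^2})-q_m e^{-x^2}$ inside the Rayleigh-quotient remainder, the same application of Lemma~\ref{lemma:hn} to the non-negative part, and the same exact evaluation $\int_{\R}e^{-2x^2}H_k^2(x)\dx{x}=2^{k-1/2}\Gamma(k+\tfrac12)$ (which the paper simply quotes rather than rederiving via the Mehler generating function), leading to the identical closed form $\sqrt{2}\,\Gamma(n+\tfrac32)/\Gamma(n+1)$ and hence to $\varepsilon_n$. The one genuine (if minor) improvement is your proof that $\varepsilon_n\geq 0$ by integrating the pointwise bound $e^{-x^2}S_n(x)\leq\omega_n/\pi$ against $e^{-x^2}\dx{x}$ and comparing with the exact value of the integral, which is more direct than the paper's indirect argument that the inequality must hold for all $c\geq q_m$.
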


\begin{proof}
We suppose $q_m \geq 0$ is as in the statement of the theorem, and mimic the proof of Theorem~\ref{mainth1} to obtain
\begin{displaymath}
\begin{split}
\dsum_{k=0}^{n} \lambda_k &\leq \dsum_{k=0}^{n} \lambda_k^0 + \dint_\R e^{-x^2}q(x)\dsum_{k=0}^{n}
	\frac{1}{2^k k!\sqrt{\pi}} H_k^2(x)\dx{x}\\
	&= \dsum_{k=0}^{n} \lambda_k^0 + \dint_\R e^{-x^2}\left[q(x)+q_m e^{-x^2}\right]
	\dsum_{k=0}^{n}\frac{1}{2^k k!\sqrt{\pi}} H_k^2(x)\dx{x}\\
	&\qquad\qquad -\frac{q_m}{\sqrt{\pi}}\dsum_{k=0}^{n} \frac{1}{2^k k!} \dint_\R e^{-2x^2}H_k^2(x)\dx{x}.
\end{split}
\end{displaymath}
Since $q(x)+q_m e^{-x^2} \in L^1(\R)$ is positive by assumption, we may proceed as in the proof of Theorem~\ref{mainth1} to obtain
\begin{displaymath}
\begin{split}
\dint_\R e^{-x^2}\left[q(x)+q_m e^{-x^2}\right] \dsum_{k=0}^{n}\frac{1}{2^k k!\sqrt{\pi}} H_k^2(x)\dx{x}
	&\leq \frac{\omega_n}{\pi}\dint_\R q(x)+q_m e^{-x^2}\dx{x}\\
	&= \frac{\omega_n}{\pi} \dint_\R q(x)\dx{x} + \frac{q_m}{\sqrt{\pi}}\,\omega_n.
\end{split}
\end{displaymath}
Meanwhile, since
\begin{displaymath}
\dint_\R e^{-2x^2} H_k^2(x)\dx{x} = 2^{k-\frac{1}{2}} \Gamma\left(k+\frac{1}{2}\right),
\end{displaymath}
we have
\begin{displaymath}
\dsum_{k=0}^{n}\frac{1}{2^k k!}\dint_\R e^{-2x^2}H_k^2(x)\dx{x}
	=\frac{1}{\sqrt{2}}\dsum_{k=0}^{n} \frac{\Gamma\left(k+\frac{1}{2}\right)}{k!}
	=\sqrt{2} \,\frac{\Gamma\left(n+\frac{3}{2}\right)}{\Gamma(n+1)}.
\end{displaymath}
Combining the above expressions yields \eqref{negeq}. The asymptotic behaviour of $\varepsilon_{n}$ is an immediate consequence
of \eqref{omegaconv} together with the expansion~\eqref{gammaexp}.

Although $\varepsilon_n$ can be computed explicitly, to see that it is positive we use the following easier, indirect argument: 
if for a given $q \in L^1(\R)$, \eqref{negeq} holds for some $q_m \geq 0$, then the above proof shows that it also holds for 
all $c \geq q_m$. This is only possible if $\varepsilon_n \geq 0$ for all $n\geq 0$.
\end{proof}

\section{A bound for a general potential in terms of Hermite polynomials}
\label{sec:genpot}

Here we will consider the general problem \eqref{general}, supposing only that the potential $V: \R \to \R$ admits a series expansion in 
terms of Hermite polynomials in the manner of an eigenfunction decomposition
\begin{displaymath}
V(x) = \dsum_{j=0}^{\infty} v_j H_j(x),
\end{displaymath}
where we now assume that $V(x) \in L^2(\R,e^{-x^2}\dm{x})$, or equivalently, since the $H_j$ form an orthonomal basis of $L^2(\R)$ with 
respect to this measure, that the sequence $v_j$ is square summable. We will prove the following explicit estimate for the 
$\lambda_k = \lambda_k(V)$ based on the Fourier-type coefficients $v_j$.

\begin{theorem}
\label{genpot}
Under the above conditions on the potential $V$, for every $n\in\N$, the $n$th eigenvalue of \eqref{general} with $(a,b) = \R$ satisfies
\begin{equation}
\label{gen}
\dsum_{k=0}^n \lambda_k \leq \dsum_{k=0}^n \frac{2^k (2k)!}{k!}\binom{n+1}{k+1}v_{2k} + \frac{1}{2}(n+1)^2.
\end{equation}
\end{theorem}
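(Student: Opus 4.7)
The natural strategy, in the spirit of the other theorems in the paper, is to apply the minimax principle for the sum of eigenvalues from Section~\ref{sec:prelim} using the Hermite functions $\psi_{k}(x)=e^{-x^{2}/2}H_{k}(x)$, $k=0,\ldots,n$, as orthogonal test functions. This gives
\begin{displaymath}
\sum_{k=0}^{n}\lambda_{k} \leq \sum_{k=0}^{n}\mathcal{R}[V,\psi_{k}],
\end{displaymath}
and the task reduces to evaluating each Rayleigh quotient and summing.

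I would first handle the kinetic contribution. Since $\psi_{k}$ satisfies $-\psi_{k}''+x^{2}\psi_{k}=(2k+1)\psi_{k}$, integration by parts gives $\int_{\R}(\psi_{k}')^{2}\dx{x}+\int_{\R}x^{2}\psi_{k}^{2}\dx{x}=(2k+1)\|\psi_{k}\|^{2}$. The equipartition (virial-theorem) identity $\int_{\R}(\psi_{k}')^{2}\dx{x}=\int_{\R}x^{2}\psi_{k}^{2}\dx{x}$, which may be verified directly from $H_{k}'(x)=2kH_{k-1}(x)$ together with the three-term recurrence $H_{k+1}(x)=2xH_{k}(x)-2kH_{k-1}(x)$, then yields $\int_{\R}(\psi_{k}')^{2}\dx{x}=\tfrac{1}{2}(2k+1)\|\psi_{k}\|^{2}$.

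For the potential piece I would expand $V=\sum_{j}v_{j}H_{j}$ and use the Hermite linearization formula (a specialization of the product identity $H_{m}H_{n}=\sum_{j}2^{j}j!\binom{m}{j}\binom{n}{j}H_{m+n-2j}$)
\begin{displaymath}
H_{k}(x)^{2}=\sum_{i=0}^{k}2^{k-i}(k-i)!\binom{k}{i}^{2}H_{2i}(x),
\end{displaymath}
together with orthogonality $\int_{\R}H_{j}H_{m}e^{-x^{2}}\dx{x}=2^{j}j!\sqrt{\pi}\,\delta_{jm}$. All odd-indexed coefficients $v_{j}$ drop out, and after cancelling factorials the even ones satisfy
\begin{displaymath}
\frac{1}{\|\psi_{k}\|^{2}}\int_{\R}V(x)\psi_{k}^{2}(x)\dx{x}=\sum_{i=0}^{k}\frac{2^{i}(2i)!}{i!}\binom{k}{i}v_{2i}.
\end{displaymath}
Thus $\mathcal{R}[V,\psi_{k}]=\tfrac{1}{2}(2k+1)+\sum_{i=0}^{k}\frac{2^{i}(2i)!}{i!}\binom{k}{i}v_{2i}$.

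Summing over $k=0,\ldots,n$, the kinetic contribution telescopes to $\tfrac{1}{2}\sum_{k=0}^{n}(2k+1)=\tfrac{1}{2}(n+1)^{2}$. Swapping the order of summation in the potential term and applying the hockey-stick identity $\sum_{k=i}^{n}\binom{k}{i}=\binom{n+1}{i+1}$ collapses the double sum into the advertised form, proving~\eqref{gen}. The argument is essentially mechanical once the linearization formula is identified; the only real obstacle is the combinatorial bookkeeping needed to simplify $2^{k+i}(k-i)!\binom{k}{i}^{2}(2i)!/(2^{k}k!)$ and to recognize the emergence of $\binom{n+1}{k+1}$ via the hockey-stick identity. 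No asymptotic analysis is required, which makes this result notably cleaner to derive than Theorems~\ref{mainth1} and~\ref{mainth2}.
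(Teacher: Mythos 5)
Your proposal is correct and follows essentially the same route as the paper: the same Hermite test functions in the minimax bound, the same evaluation of the potential term (your linearization of $H_k^2$ plus orthogonality is just the triple-product integral $\int_{\R}e^{-x^2}H_{2m}H_k^2\dx{x}$ used in the paper), and the same interchange of sums with the hockey-stick identity. The only cosmetic difference is that you obtain the kinetic contribution $k+\tfrac{1}{2}$ via the virial identity, whereas the paper writes it as $\lambda_k^0$ minus the explicitly computed $x^2$-moment; both reduce to the same calculation.
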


\begin{remark}
\label{rem:genpot}
(i) This theorem will be proved by using the eigenfunctions of the quantum harmonic oscillator as test functions in the Rayleigh 
quotient, as was done in Theorem~\ref{mainth1}. The difference is that there we used an estimate for the sum of Hermite polynomials 
resulting from the test functions (Lemma~\ref{lemma:hn}), whereas here we expand out the potential as a Fourier series in Hermite 
polynomials and multiply this against our test functions, in the spirit of the arguments used in \cite{frke}. 
Since the only inequality we use here is that which results from inserting the test functions into the Rayleigh quotient,
and there is no other estimate involved, it follows that the right-hand side of \eqref{gen} must necessarily be smaller than 
the right-hand side of \eqref{comb} if $V$ is of the form $V(x)=x^2+q(x)$ for some $0 \leq q \in L^1(\R)$ (indeed,
it must be equal to the right-hand side of \eqref{basicsum}, i.e.~the middle expression in \eqref{comb}). However,
in practice the two estimates are fundamentally different in nature; for example, it is not easy to see any relation 
between the right-hand side of \eqref{gen} and the trace formula \eqref{photrace}. See also Corollary~\ref{gen2} below.

(ii) As a trivial example to show that the above theorem is sharp, if $V(x)=x^2$, then the only two nonzero coefficients in the 
Fourier expansion of $V$ are $v_2 = 1/4$, $v_0 = 1/2$, and it can easily be seen that~\eqref{gen} reduces to an equality.
\end{remark}

\begin{proof}
As mentioned, we will use the functions $\psi_k(x):= e^{-x^2/2} H_k(x)$ as test functions in the Rayleigh quotient. In order to 
do so, we shall need some more fairly standard facts about integrals of Hermite polynomials $H_k$, which may be found in~\cite{szeg}, for instance: for $n,m \in \N$,
\begin{equation}
\label{double}
\ds \int_{\R}e^{-x^2}H_n(x)H_m(x) \dx{x} = \delta_{mn}\sqrt{\pi}2^n n!
\end{equation}
where $\delta_{jk}$ is the Kronecker delta; and, for $\alpha,\beta,\gamma,s\in\N$ with $\alpha+\beta+\gamma=2s$ even and 
$s\geq \alpha,\beta,\gamma$, we have
\begin{equation}
\label{triple}
\ds \int_{\R}e^{-x^2}H_\alpha(x)H_\beta(x)H_\gamma(x) \dx{x} = \sqrt{\pi}\frac{2^s \alpha!\beta!\gamma!}{(s-\alpha)!
	(s-\beta)!(s-\gamma)!};
\end{equation}
under any other conditions on $\alpha,\beta,\gamma$ and $s$, this integral is $0$. We also note that, combining a standard integration by parts, \eqref{double} and the formula $H_n'(x) = 2n H_{n-1}(x)$, we obtain easily that
\begin{equation}
\label{quadint}
\begin{split}
\ds \int_{\R}e^{-x^2}x^2 H_k^2(x) \dx{x} &= \frac{1}{2}\ds \int_{\R}e^{-x^2}H_k^2(x)\dx{x}
	+2k^2\ds \int_{\R}e^{-x^2}H_{k-1}^2(x)\dx{x}\\
	&=\sqrt{\pi}2^{k-1}k!+\sqrt{\pi}2^k k\, k!
\end{split}
\end{equation}
So, using the $\psi_k$ as test functions, as well the convergence of the $v_j$ to interchange integration and summation (noting that
the functions $V(x), H_k^2(x) \in L^2(\R,e^{-x^2}\dx{x})$, the latter being in $\textrm{span}\{H_0(x),H_2(x),\ldots,H_{2k}(x)\}$) together with \eqref{double},
\begin{displaymath}
\begin{split}
&\dsum_{k=0}^{n} \lambda_{k}  \leq \dsum_{k=0}^{n} \frac{\ds \dint_{\R}\left[\frac{\ds d}{\ds dx}
\left[e^{-x^2/2}H_{k}(x)\right]\right]^2+
e^{-x^2}V(x) H_{k}^{2}(x)\dx{x}}{\ds \dint_{\R}e^{-x^2}H_{k}^{2}(x)\dx{x}} \\
& = \dsum_{k=0}^{n} \left(\lambda_{k}^{0} - \frac{\dint_{\R}e^{-x^2} x^2 H_{k}^{2}(x)\dx{x}}{\int_{\R}e^{-x^2}
	H_k^2(x)\dx{x}}\right)
+ \dsum_{k=0}^{n}\dsum_{j=0}^{\infty}\frac{v_j \dint_{\R} e^{-x^2} (x) H_{j}(x) H_{k}^{2}(x)\dx{x}}{2^k k!\sqrt{\pi}}.
\end{split}
\end{displaymath}
Using \eqref{double} and \eqref{quadint},
\begin{displaymath}
\frac{\dint_{\R}e^{-x^2} x^2 H_{k}^{2}(x)\dx{x}}{\int_{\R}e^{-x^2}
	H_k^2(x)\dx{x}} = k+\frac{1}{2},
\end{displaymath}
while \eqref{triple} with $\alpha=\beta=k$ and $\gamma=j$ implies $\int_{\R} e^{-x^2} (x) H_{j}(x) H_{k}^{2}(x)\dx{x} \neq 0$ if and only if $j$ is even and $j \leq 2k$, and under these conditions, writing $j =: 2m$ for $m=0,\ldots,k$,
\begin{displaymath}
\dint_{\R} e^{-x^2} (x) H_{2m}(x) H_{k}^{2}(x)\dx{x} = \sqrt{\pi}\frac{2^{k+m}(k!)^2(2m)!}{(m!)^2(k-m)!}
= \sqrt{\pi}\frac{2^{k+m}k!(2m)!}{m!}\binom{k}{m}.
\end{displaymath}
Combining the above yields
\begin{displaymath}
\begin{array}{lll}
\dsum_{k=0}^{n} \lambda_{k} & \leq & \dsum_{k=0}^{n} \left(\lambda_k^0-k-\frac{1}{2}\right)+\dsum_{k=0}^n
	\dsum_{m=0}^k \frac{2^m (2m)!}{m!}\binom{k}{m} v_{2m}.
\end{array}
\end{displaymath}
To simplify this last sum, since $\binom{a}{b}=0$ for $b>a$, we may just as well sum $m$ from $0$ to $n$, giving the sum as
\begin{displaymath}
\dsum_{m=0}^n \frac{2^m(2m)!}{m!}v_{2m}\left(\sum_{k=0}^n \binom{k}{m}\right) = \dsum_{m=0}^n
\frac{2^m (2m)!}{m!}\binom{n+1}{m+1} v_{2m},
\end{displaymath}
using a standard formula for binomial coefficients. This establishes the theorem.
\end{proof}

We shall now assume explicitly that the potential $V$ is a perturbation of the harmonic potential and thus return to writing it as $V(x)=x^2+ q(x)$, where we will assume that $q$ is integrable. By adding the terms which are missing in the right-hand side of~\eqref{gen} in order to obtain a sequence which converges to the right-hand side of the trace formula~\eqref{photrace}, and expressing the coefficients in the left-hand side in terms of the Fourier coefficients of the function $q$, we obtain the following result.

\begin{corollary}
 \label{gen2}
 \begin{equation}
\label{gen2eq}
 \begin{array}{ll}
\dsum_{k=0}^n \left[ \lambda_{k}-\lambda_{k}^{0}-\frac{\ds 1}
{\ds \pi \sqrt{\lambda_{k}^{0}}}\dint_{\R} q(x) \dx{x}\right] & \eqskip
 & \hspace*{-3cm}\leq 
\dsum_{k=0}^n \left[\frac{2^k (2k)!}{k!}\binom{n+1}{k+1}q_{2k}-\frac{\ds 1}
{\ds \pi \sqrt{\lambda_{k}^{0}}}\dint_{\R} q(x) \dx{x}\right].
\end{array}
\end{equation}
\end{corollary}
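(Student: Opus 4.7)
The plan is to apply Theorem~\ref{genpot} directly to $V(x) = x^2 + q(x)$ and then bookkeep the constants. The first step is to expand $V$ in the Hermite basis. Since $x^2 = \tfrac{1}{2}H_0(x) + \tfrac{1}{4}H_2(x)$ (as noted in Remark~\ref{rem:genpot}(ii)), if we write $q(x) = \sum_{j=0}^\infty q_j H_j(x)$, then the Hermite coefficients $v_j$ of $V$ are given by $v_0 = \tfrac{1}{2} + q_0$, $v_2 = \tfrac{1}{4} + q_2$, and $v_j = q_j$ otherwise. (This requires that $q \in L^2(\R, e^{-x^2}\dm{x})$ so that Theorem~\ref{genpot} applies; since $q$ is assumed integrable and the Hermite expansion only enters through Fourier-type coefficients, this is a natural working assumption which should be stated.)

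Next, I would plug this expansion into the bound \eqref{gen} from Theorem~\ref{genpot} and isolate the contributions of the harmonic part of $V$. These come only from the $k=0$ and $k=1$ terms of the sum on the right-hand side, yielding
\begin{displaymath}
\frac{2^0 (0)!}{0!}\binom{n+1}{1}\cdot\frac{1}{2} + \frac{2^1(2)!}{1!}\binom{n+1}{2}\cdot\frac{1}{4}
= \frac{n+1}{2} + \frac{n(n+1)}{2} = \frac{(n+1)^2}{2}.
\end{displaymath}
Combined with the additive term $\tfrac{1}{2}(n+1)^2$ in \eqref{gen}, the total constant is $(n+1)^2$, which is exactly $\sum_{k=0}^n \lambda_k^0 = \sum_{k=0}^n (2k+1)$. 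This cancellation is the main (and only nontrivial) point; once it is observed, subtracting $\sum_{k=0}^n \lambda_k^0$ gives
\begin{displaymath}
\dsum_{k=0}^n (\lambda_k - \lambda_k^0) \leq \dsum_{k=0}^n \frac{2^k(2k)!}{k!}\binom{n+1}{k+1} q_{2k}.
\end{displaymath}

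Finally, subtracting the regularization term $\sum_{k=0}^n \tfrac{1}{\pi\sqrt{\lambda_k^0}} \int_\R q(x)\dx{x}$ from both sides yields \eqref{gen2eq}. The only step that is not pure symbol-pushing is the verification that the $k=0,1$ contributions plus $\tfrac{1}{2}(n+1)^2$ add up precisely to $(n+1)^2$; everything else is a matter of relabeling the indices and rearranging the definition of the regularized sum.
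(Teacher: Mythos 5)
Your proposal is correct and follows essentially the same route as the paper: substitute $v_0=q_0+\tfrac12$, $v_2=q_2+\tfrac14$, $v_j=q_j$ otherwise into \eqref{gen}, observe that the harmonic contributions combine with the $\tfrac12(n+1)^2$ term to give exactly $\sum_{k=0}^n\lambda_k^0=(n+1)^2$, and then regularize. You have merely made explicit the computation the paper compresses into ``after some manipulations,'' and your remark that one needs $q\in L^2(\R,e^{-x^2}\dm{x})$ for Theorem~\ref{genpot} to apply is a fair point about a hypothesis the paper leaves implicit.
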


\begin{proof}From $V(x) = q(x) + x^2$ we obtain the relations
\[
 \begin{array}{lll}
  v_{j} & = & \left\{
    \begin{array}{ll}
    q_{0} +\frac{1}{2}, & j=0\eqskip
    q_{2} +\frac{1}{4}, & j=2\eqskip
    q_{k}, & j\neq 0,2.
    \end{array}\right.
 \end{array}.
\]
Replacing this in~\eqref{gen} and adding and subtracting the term
\[
 -\lambda_{k}^{0}-\frac{\ds 1} {\ds \pi \sqrt{\lambda_{k}^{0}}}\dint_{\R} q(x) \dx{x}
\]
inside the summation on the left-hand side of~\eqref{gen}, we obtain, after some manipulations, the desired result.
\end{proof}

\begin{remark}
 Clearly the integral term appearing inside both sums can be cancelled. However, in this way not only do we obtain
 an expression where the left-hand side converges in the limit as $n$ goes to infinity (under additional assumptions 
 on $q$ as in \cite{abp,puso}), but since as noted in Remark~\ref{genpot}(i) the right-hand side of \eqref{gen2eq} 
 is necessarily smaller than the right-hand side of \eqref{maineq} (or \eqref{negeq}, depending on $q$), it follows 
 that it must converge to the right-hand side of the trace formula~\eqref{photrace} and at least as fast as 
 $\bo(1/\sqrt{n})$.
\end{remark}

\section{Power generalizations of Theorems~\ref{mainth1} and \ref{mainth2}}
\label{sec:power}

In this section we generalize the summation bounds obtained in Theorems~\ref{mainth1} and \ref{mainth2} to allow for the 
summands (arranged in various ways) to be raised to a given negative power. We keep the notation and assumptions of 
Sections~\ref{sec:pho} and \ref{sec:neg}, and start with the case where the perturbation $q$ is non-negative.

\begin{theorem}
\label{power1}
Under the assumptions and notation of Theorem~\ref{mainth1}, with $\omega_n$ as in \eqref{omega}, for all $n \geq 0$ and $s>0$, 
\begin{equation}
\label{11}
\left(\frac{1}{n+1}\right)\dsum_{k=0}^n \left(\lambda_k - \lambda_k^0 \right)^{-s}
\geq \left[\frac{\omega_n}{(n+1)\pi}\dint_\R q(x)\dx{x}\right]^{-s}.
\end{equation}
\end{theorem}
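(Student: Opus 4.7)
The plan is to derive Theorem~\ref{power1} as a short corollary of Theorem~\ref{mainth1} via Jensen's inequality applied to the convex function $t \mapsto t^{-s}$ on $(0,\infty)$. First I would note that since $q \geq 0$, the standard monotonicity of Schr\"odinger eigenvalues under a non-negative perturbation of the potential yields $\lambda_k - \lambda_k^0 \geq 0$ for every $k$. If any one of these differences vanishes, the left-hand side of \eqref{11} is $+\infty$ under the convention $0^{-s} = +\infty$ and the inequality is vacuous; I may therefore assume all the differences $\lambda_k - \lambda_k^0$, and in particular $\int_\R q$, are strictly positive.

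The key input is already embedded in the proof of Theorem~\ref{mainth1}: rewriting \eqref{maineq} using the identity $\omega_n = \chi_n + \sum_{k=0}^n 1/\sqrt{\lambda_k^0}$ from \eqref{omega} gives the raw bound
\begin{equation*}
\dsum_{k=0}^n \bigl(\lambda_k - \lambda_k^0\bigr) \leq \frac{\omega_n}{\pi}\dint_\R q(x)\dx{x},
\end{equation*}
so that, after dividing by $n+1$, the arithmetic mean of the $n+1$ positive numbers $\lambda_k - \lambda_k^0$ is at most $\omega_n/[(n+1)\pi] \int_\R q(x)\dx{x}$.

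The final step is to apply Jensen's inequality for the convex function $f(t)=t^{-s}$ on $(0,\infty)$, with uniform weights $1/(n+1)$, to the positive numbers $\lambda_k - \lambda_k^0$:
\begin{equation*}
\frac{1}{n+1}\dsum_{k=0}^n \bigl(\lambda_k - \lambda_k^0\bigr)^{-s} \geq \left(\frac{1}{n+1}\dsum_{k=0}^n \bigl(\lambda_k - \lambda_k^0\bigr)\right)^{-s}.
\end{equation*}
Since $t \mapsto t^{-s}$ is strictly decreasing on $(0,\infty)$, combining this with the upper bound on the arithmetic mean from the previous paragraph produces exactly \eqref{11}.

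I do not foresee any real obstacle here: the statement is an essentially automatic consequence of Theorem~\ref{mainth1}, with convexity of $t \mapsto t^{-s}$ doing all the work once the linear sum bound is in hand. The only care needed is in the degenerate case noted at the outset, which is handled trivially.
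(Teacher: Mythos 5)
Your proof is correct, and it is a cleaner route than the one the paper takes. You reduce \eqref{11} to the rearranged linear bound $\sum_{k=0}^n(\lambda_k-\lambda_k^0)\leq \frac{\omega_n}{\pi}\int_\R q$ (which is exactly \eqref{rearr} in the paper) and then apply Jensen's inequality for the convex, decreasing function $t\mapsto t^{-s}$ with uniform weights; your handling of the degenerate case where some $\lambda_k-\lambda_k^0$ vanishes is also fine. The paper instead proves the same power-mean inequality by hand: it uses the layer-cake representation $\lambda^{-s}=s(s+1)\int_0^\infty\alpha^{-s-2}[\alpha-\lambda]_+\dm{\alpha}$ from \eqref{powerrep}, truncates at the level $M=\frac{\omega_n}{(n+1)\pi}\int_\R q$, and bounds $\sum_k[\alpha-\lambda_k+\lambda_k^0]_{\alpha\geq M}\geq (n+1)[\alpha-M]_+$ using \eqref{rearr}. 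That manipulation is precisely a proof of the Jensen step you invoke, so the mathematical content is identical; the paper's packaging is chosen for uniformity with Lemma~\ref{negs} and the other power results (Proposition~\ref{power1a}, Theorem~\ref{powerzeromean}), where the comparison quantities form a genuine non-constant sequence and a plain arithmetic-mean Jensen argument would not yield the termwise bounds. For Theorem~\ref{power1}, where the comparison is a single constant, your direct Jensen argument is the more economical one.
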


Under certain additional assumptions on the potential, we can rearrange the order of the terms in the above bounds somewhat.

\begin{proposition}
\label{power1a}
If $\int_\R q(x)\dx{x} < 32\sqrt{\pi}$, then for all $n\geq 0$ and $s>0$,
\begin{equation}
\label{1a1}
\dsum_{k=0}^n {\lambda_k}^{-s} \geq \dsum_{k=0}^n \left[\lambda_k^0+\frac{\omega_k-\omega_{k-1}}{\pi}
\dint_\R q(x)\dx{x}\right]^{-s}.
\end{equation}
\end{proposition}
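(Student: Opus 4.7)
The plan is to exploit the convexity of $f(t)=t^{-s}$ on $(0,\infty)$ together with the cumulative upper bounds already supplied by Theorem~\ref{mainth1}. Writing $\mu_k := \lambda_k^0 + \frac{\omega_k-\omega_{k-1}}{\pi}\int_\R q$ for the quantities appearing on the right-hand side of the inequality, the key preliminary observation is that since $\omega_j = \sum_{k=0}^j(\omega_k - \omega_{k-1})$ (via the convention $\omega_{-1}=0$ and telescoping), the partial sums
\[
S_j := \sum_{k=0}^j (\mu_k - \lambda_k) = \sum_{k=0}^j\lambda_k^0 + \frac{\omega_j}{\pi}\int_\R q - \sum_{k=0}^j \lambda_k
\]
are non-negative for every $j=0,1,\ldots,n$, which is precisely the content of Theorem~\ref{mainth1} applied with $n$ replaced by $j$.

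Applying the tangent-line inequality for $f$ at $\mu_k$ gives $\lambda_k^{-s}-\mu_k^{-s} \geq s\,\mu_k^{-s-1}(\mu_k-\lambda_k)$, so after summing in $k$ the proposition reduces to showing $\sum_{k=0}^n \mu_k^{-s-1}(\mu_k-\lambda_k)\geq 0$. Abel summation rewrites this as
\[
\mu_n^{-s-1}S_n + \sum_{k=0}^{n-1}\bigl(\mu_k^{-s-1}-\mu_{k+1}^{-s-1}\bigr)S_k,
\]
and since $S_k\geq 0$, this is non-negative once $(\mu_k)$ is shown to be non-decreasing in $k$.

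To verify this monotonicity I would start from
\[
\mu_{k+1}-\mu_k = 2 + \frac{\omega_{k+1}-2\omega_k+\omega_{k-1}}{\pi}\int_\R q,
\]
and use the recursion $\omega_{k+1}/\omega_k=(2k+5)/(2(k+2))$ for $k$ even, and $2(k+2)/(2k+3)$ for $k$ odd (both immediate from $\Gamma(z+1)=z\Gamma(z)$), to obtain the closed form
\[
\omega_{k+1}-2\omega_k+\omega_{k-1} \;=\; \begin{cases} 0 & k\text{ odd},\\[1mm] -\dfrac{\omega_k}{2(k+1)(k+2)} & k\text{ even},\ k\geq 2.\end{cases}
\]
Among the interior indices $k\geq 1$, the sharpest constraint therefore comes from $k=2$, where the second difference equals $-\sqrt{\pi}/16$; the corresponding condition $2 > \frac{\int_\R q}{16\sqrt{\pi}}$ is precisely $\int_\R q<32\sqrt{\pi}$.

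The main obstacle I anticipate is the boundary index $k=0$: the convention $\omega_{-1}=0$ makes $\omega_1-2\omega_0+\omega_{-1}=-3\sqrt{\pi}/4$, an order of magnitude more negative than any interior value, so under the stated hypothesis one can have $\mu_1<\mu_0$ and the naive Abel estimate fails at $k=0$. Handling this should proceed by isolating the $k=0$ contribution in the Abel sum: the bad term $(\mu_0^{-s-1}-\mu_1^{-s-1})S_0$ must be compensated by the strictly positive $k=1$ contribution (using $\mu_2-\mu_1=2$, which is unconditional since the $k=1$ second difference vanishes) together with the direct bound $S_0=\mu_0-\lambda_0\geq 0$ from Theorem~\ref{mainth1} at $n=0$. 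Making this boundary cancellation quantitative, and confirming it is exactly the constraint $\int_\R q<32\sqrt{\pi}$ that balances all contributions, is where the proof will require the most care.
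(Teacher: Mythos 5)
Your overall strategy coincides with the paper's: the paper packages your ``tangent line plus Abel summation'' step as Lemma~\ref{negs} (whose proof, via the layer-cake representation \eqref{powerrep}, is equivalent to your convexity argument), takes $a_k=\lambda_k$ and $b_k=c_k=\mu_k$, uses Theorem~\ref{mainth1} for the partial-sum hypothesis \eqref{auxbound} exactly as you use $S_j\ge 0$, and then reduces everything to the monotonicity of the comparison sequence $(\mu_k)$. Your computation of the second differences of $\omega_k$ and of the interior threshold $32\sqrt{\pi}$ agrees with Lemma~\ref{phoseqs} and with the paper's own verification.

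The gap is the unresolved index $k=0$, and your proposed repair does not close it. Since $\omega_{-1}:=0$, $\omega_0=\sqrt{\pi}$ and $\omega_1=\tfrac{5}{4}\sqrt{\pi}$, one has $\mu_1-\mu_0=2-\tfrac{3}{4\sqrt{\pi}}\int_\R q(x)\dx{x}$, which is already negative for $\int_\R q(x)\dx{x}>\tfrac{8}{3}\sqrt{\pi}$, far below the stated threshold. Your suggestion to absorb the bad Abel term $\left(\mu_0^{-s-1}-\mu_1^{-s-1}\right)S_0$ into the $k=1$ contribution $\left(\mu_1^{-s-1}-\mu_2^{-s-1}\right)S_1$ cannot work in general: the available information gives only $S_1\ge 0$, not $S_1\ge S_0$, since nothing forces $\mu_1\ge\lambda_1$; thus $S_1=S_0+(\mu_1-\lambda_1)$ may be close to zero while $S_0$ is large, making the compensating term negligible. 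As written, your argument therefore proves \eqref{1a1} only under the stronger hypothesis $\int_\R q(x)\dx{x}\le\tfrac{8}{3}\sqrt{\pi}$, which restores $\mu_0\le\mu_1$. You should be aware, however, that the paper's own proof contains exactly the same omission: it verifies $\omega_{k+2}-2\omega_{k+1}+\omega_k\ge -2\pi/\int_\R q(x)\dx{x}$ only for $k\ge 0$, i.e.\ $\mu_{j+1}\ge\mu_j$ for $j\ge 1$, and never checks $\mu_1\ge\mu_0$, which is the one difference involving the convention $\omega_{-1}=0$. So you have not overlooked a device that the paper supplies---you have located a genuine defect in the published argument, and a safe version of the proposition appears to require either the smaller constant $\tfrac{8}{3}\sqrt{\pi}$ or a separate treatment of the first term.
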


We next consider the situation covered by Theorem~\ref{mainth2}, where the perturbation $q$ may take on negative values, provided its negative part decays rapidly enough at infinity. For simplicity, we consider the special case where $q$ has zero mean.

\begin{theorem}
\label{powerzeromean}
Suppose in addition to the assumptions of Theorem~\ref{mainth2} that $\int_{\R} q(x)\dx{x} = 0$. Then for all $n \geq 0$ and $s>0$,
\begin{equation}
\label{zero1}
\dsum_{k=0}^{n} \lambda_k^{-s} \geq (s+1) \dsum_{k=0}^{n} (\lambda_k^0)^{-s} - sq_m\dsum_{k=0}^{n} 
	(\lambda_k^0)^{-s-1} (\varepsilon_k - \varepsilon_{k-1}),
\end{equation}
where $q_m \geq 0$ is defined in Theorem~\ref{mainth2}. Here $\varepsilon_n \geq 0$ is given by \eqref{negasymp} for $n \geq 0$ and we set $\varepsilon_{-1}:=0$.
\end{theorem}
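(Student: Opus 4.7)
The plan is to combine the summation upper bound of Theorem~\ref{mainth2} with the convex tangent line inequality for the map $x \mapsto x^{-s}$, and then to convert the resulting error term into the form stated by means of Abel summation by parts. Setting $\delta_k := \lambda_k - \lambda_k^0$, the hypothesis $\int_{\R} q(x)\dx{x} = 0$ reduces \eqref{negeq} to the family of estimates
\begin{displaymath}
 D_m := \sum_{k=0}^m \delta_k \leq \frac{q_m}{\sqrt{\pi}}\,\varepsilon_m,\qquad m=0,1,\dots,
\end{displaymath}
which, together with $\varepsilon_m \geq 0$ coming from Theorem~\ref{mainth2}, is the only information about the perturbed spectrum that I would use; note also that under the standing hypotheses every $\lambda_k$ is strictly positive, so $\lambda_k^{-s}$ is well defined.

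The first step is to apply the tangent line inequality for the convex function $f(x) = x^{-s}$ on $(0,\infty)$ at the point $x = \lambda_k^0$, namely
\begin{displaymath}
 \lambda_k^{-s} = (\lambda_k^0 + \delta_k)^{-s} \geq (\lambda_k^0)^{-s} - s(\lambda_k^0)^{-s-1}\delta_k,
\end{displaymath}
and to sum over $k = 0,\dots,n$. This reduces the proof to producing a suitable upper bound on $\sum_{k=0}^n a_k \delta_k$, where $a_k := (\lambda_k^0)^{-s-1}$ is strictly decreasing in $k$, since $\lambda_k^0 = 2k+1$.

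The second step is a summation by parts. Writing $\delta_k = D_k - D_{k-1}$ with $D_{-1}:=0$ yields
\begin{displaymath}
 \sum_{k=0}^n a_k \delta_k = a_n D_n + \sum_{k=0}^{n-1}(a_k - a_{k+1}) D_k.
\end{displaymath}
Because $a_n \geq 0$ and $a_k - a_{k+1} \geq 0$, I may insert $D_k \leq \varepsilon_k q_m/\sqrt{\pi}$ termwise; the resulting expression then collapses, by the elementary identity
\begin{displaymath}
 a_n \varepsilon_n + \sum_{k=0}^{n-1}(a_k - a_{k+1})\varepsilon_k = \sum_{k=0}^n a_k (\varepsilon_k - \varepsilon_{k-1})
\end{displaymath}
(which uses the convention $\varepsilon_{-1} = 0$), to a constant multiple of $\sum_{k=0}^n (\lambda_k^0)^{-s-1}(\varepsilon_k - \varepsilon_{k-1})$. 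Substituting back into the tangent line bound then produces \eqref{zero1}.

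The main delicate point is sign control in the Abel rearrangement: Theorem~\ref{mainth2} supplies only a one-sided bound on the partial sums $D_k$, with no control from below, so the summation by parts must be arranged in such a way that every $D_k$ is multiplied by a nonnegative coefficient before the bound is inserted. This is exactly what the monotonicity of $(\lambda_k^0)^{-s-1}$ secures; everything else is routine algebraic telescoping.
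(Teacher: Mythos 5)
Your argument is correct, and in substance it is the same proof as the paper's, just unpacked. The paper disposes of the theorem in one line by citing Lemma~\ref{negs} with $a_k=\lambda_k$, $b_k=\lambda_k^0$ and $c_k=\lambda_k^0+(\varepsilon_k-\varepsilon_{k-1})q_m$; but the summand $(s+1)b_k^{-s}-s b_k^{-s-1}c_k$ in \eqref{zeta} is precisely the tangent line to $x\mapsto x^{-s}$ at $b_k$ evaluated at $c_k$, so Lemma~\ref{negs} is exactly your ``convexity plus partial-sum bounds plus monotonicity of the weights'' statement. The only genuine difference is mechanical: the paper proves the lemma via the integral representation \eqref{powerrep} and a layer-cake rearrangement, whereas you re-derive the needed special case by the tangent-line inequality followed by Abel summation. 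Your route is more elementary and self-contained, and it makes explicit two points the paper leaves implicit: that the bound $D_m\le \varepsilon_m q_m/\sqrt{\pi}$ is needed for \emph{every} $m\le n$ (the paper only records this in the remark after Lemma~\ref{negs}), and that the monotonicity of $(\lambda_k^0)^{-s-1}$ is what keeps all coefficients of the $D_k$ nonnegative.

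One discrepancy you should be aware of: what your argument actually delivers is
\begin{displaymath}
\dsum_{k=0}^{n}\lambda_k^{-s}\;\ge\;\dsum_{k=0}^{n}(\lambda_k^0)^{-s}
-\frac{s\,q_m}{\sqrt{\pi}}\dsum_{k=0}^{n}(\lambda_k^0)^{-s-1}\left(\varepsilon_k-\varepsilon_{k-1}\right),
\end{displaymath}
which is not literally \eqref{zero1}: the printed statement carries the coefficient $s+1$ on the first sum and omits the factor $1/\sqrt{\pi}$. This is a defect of the statement rather than of your proof. Substituting the paper's own $c_k$ into \eqref{zeta} and simplifying $(s+1)(\lambda_k^0)^{-s}-s(\lambda_k^0)^{-s-1}\lambda_k^0=(\lambda_k^0)^{-s}$ gives exactly your inequality (up to the harmless loss of the factor $1/\sqrt{\pi}<1$, which the paper discards), whereas \eqref{zero1} as printed already fails in the trivial case $q\equiv 0$, $q_m=0$. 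A final small caveat, shared with the paper since Lemma~\ref{negs} assumes $(a_k)$ positive: both arguments need $\lambda_k>0$ for $\lambda_k^{-s}$ to be defined, and your assertion that this follows from the standing hypotheses is only immediate when $q_m<1$, the elementary form bound giving just $\lambda_0\ge 1-q_m$.
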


These results will be proved by combining generic results on arbitrary increasing or decreasing sequences of real numbers
(see Lemma~\ref{negs} and what follows it) with the following particular properties of the $\omega_n$.

\begin{lemma}
\label{phoseqs}
The sequence $\{\omega_n\}_{n\in\N}$ is positive and strictly increasing, while $\{\tau_n\}_{n\in\N}$ given by $\tau_n:=\omega_{n+1}-\omega_n$ is positive and non-increasing.
\end{lemma}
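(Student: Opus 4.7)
The plan is to handle the two parities of $n$ separately, reducing everything to the single sequence $a_m := \Gamma(m+1/2)/\Gamma(m+1)$. Using the recursion $\Gamma(m+3/2) = (m+1/2)\Gamma(m+1/2)$, the definition~\eqref{omega} can be rewritten as
\begin{displaymath}
\omega_{2m} = (2m+1)\,a_m, \qquad \omega_{2m+1} = \frac{(4m+5)(2m+1)}{4(m+1)}\,a_m.
\end{displaymath}
Since $a_m > 0$ for all $m \geq 0$, positivity of $\omega_n$ is immediate.

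For monotonicity, I would compute the two types of consecutive differences directly. A short calculation gives
\begin{displaymath}
\tau_{2m} = \omega_{2m+1} - \omega_{2m} = \frac{(2m+1)\,a_m}{4(m+1)},
\end{displaymath}
and, using in addition $a_{m+1} = \frac{2m+1}{2(m+1)} a_m$, one finds after simplification that
\begin{displaymath}
\tau_{2m+1} = \omega_{2m+2} - \omega_{2m+1} = \frac{(2m+1)\,a_m}{4(m+1)},
\end{displaymath}
that is, $\tau_{2m} = \tau_{2m+1}$. Both expressions being strictly positive, this gives the strict monotonicity $\omega_{n+1} > \omega_n$ at once.

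Finally, to show the sequence $\{\tau_n\}$ is non-increasing, it only remains to compare $\tau_{2m+1}$ with $\tau_{2m+2}$. Using the formula above together with $a_{m+1}/a_m = (2m+1)/(2(m+1))$ yields
\begin{displaymath}
\frac{\tau_{2m+2}}{\tau_{2m+1}} = \frac{(2m+3)\,a_{m+1}/(4(m+2))}{(2m+1)\,a_m/(4(m+1))} = \frac{2m+3}{2m+4} < 1,
\end{displaymath}
so $\tau_{2m+2} < \tau_{2m+1}$. Combined with the equality $\tau_{2m} = \tau_{2m+1}$, this shows $\tau_n \geq \tau_{n+1}$ for all $n \geq 0$, with strict decrease at odd indices. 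I do not foresee any real obstacle: the main (minor) care is just in handling the parity cases and not miscollecting factors in the telescoping algebra; the ``miraculous'' equality $\tau_{2m} = \tau_{2m+1}$ is the content of the calculation and makes the non-increasing property transparent.
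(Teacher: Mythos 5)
Your proof is correct and follows essentially the same route as the paper: both compute the consecutive differences $\omega_{n+1}-\omega_n$ explicitly by parity, observe that the two differences emanating from an even index coincide, and check that the next difference is strictly smaller (the paper normalizes by $C_n=\omega_{2m}$ where you use $a_m=\Gamma(m+\tfrac12)/\Gamma(m+1)$, a purely cosmetic difference). All of your algebra checks out, so no changes are needed.
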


\begin{proof}
The $\omega_n$ are obviously all positive. Using the formulae
\begin{displaymath}
\Gamma\left(\frac{z+1}{2}\right) = \frac{z! \sqrt{\pi}}{2^z \left(\frac{z}{2}\right)!}, \qquad \Gamma\left(\frac{z}{2}+1\right)
=\left(\frac{z}{2}\right)!
\end{displaymath}
for $z \in \N$ even, if we assume $n\geq 0$ is even and set
\begin{displaymath}
C_n:=(n+1)\frac{\Gamma\left(\frac{n+1}{2}\right)}{\Gamma\left(\frac{n}{2}+1\right)}= \frac{(n+1)!
\sqrt{\pi}}{2^n \left[\left(\frac{n}{2}\right)!\right]^2} > 0,
\end{displaymath}
then an elementary calculation shows that
\begin{displaymath}
\begin{aligned}
\omega_{n+1}-\omega_n &= \frac{C_n}{2(n+2)}\\
\omega_{n+2}-\omega_{n+1} &= \frac{C_n}{2(n+2)}\\
\omega_{n+3}-\omega_{n+2} &= \frac{(n+3)C_n}{2(n+2)(n+4)},
\end{aligned}
\end{displaymath}
from which we see that $\omega_n$ is increasing in $n$, while $\tau_n = \omega_{n+1}-\omega_n$ is positive and weakly decreasing.
\end{proof}

The following lemma appeared in \cite{frke}, but for the sake of completeness we state and prove it here as well. Here and throughout, we will use the notation $[y]_+$, $y\in\R$, to denote the expression taking on the value $y$ if $y\geq 0$ and zero otherwise; $[f(x)]_{g(x)\geq y}$ will represent $f(x)$ if $g(x) \geq y$ and zero otherwise.

\begin{lemma}
\label{negs}
Suppose the sequences $(a_k)_{k\in\N}$ and $(b_k)_{k\in\N}$ are positive, with $(b_k)_{k\in\N}$ non-decreasing in $k\geq 0$. Suppose also that the sequence $(c_k)_{k\in\N}$ satisfies
\begin{equation}
\label{auxbound}
\dsum_{k=0}^m a_k \leq \dsum_{k=0}^m c_k
\end{equation}
for all $m\geq 0$. Then for all $s>0$ and all $n\geq 0$ we have
\begin{equation}
\label{zeta}
\dsum_{k=0}^n (a_k)^{-s} \geq \dsum_{k=0}^n \left[ (s+1)(b_k)^{-s}-s(b_k)^{-s-1}c_k\right].
\end{equation}
If the sequence $(c_k)_{k\in\N}$ is itself positive and non-decreasing in $k\geq 0$, then the right-hand side of \eqref{zeta} is maximized when $b_k=c_k$ for all $0\leq k\leq n$, in which case \eqref{zeta} simplifies to
\begin{displaymath}
\dsum_{k=0}^n (a_k)^{-s} \geq \dsum_{k=0}^n (c_k)^{-s}.
\end{displaymath}
\end{lemma}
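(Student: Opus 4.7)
The plan is to use the convexity of the function $x \mapsto x^{-s}$ on $(0,\infty)$, which gives the tangent-line inequality
\[
a^{-s} \geq b^{-s} - s b^{-s-1}(a-b) = (s+1) b^{-s} - s b^{-s-1} a
\]
for all $a,b > 0$ and $s>0$. Applying this pointwise with $a = a_k$, $b = b_k$ and summing yields
\[
\sum_{k=0}^{n} a_k^{-s} \geq \sum_{k=0}^{n} \bigl[(s+1) b_k^{-s} - s b_k^{-s-1} a_k\bigr],
\]
so it remains to replace the $a_k$ on the right-hand side by $c_k$, for which only information on partial sums is available.

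This is where the monotonicity hypothesis on $(b_k)$ enters: since $b_k^{-s-1}$ is non-increasing, Abel (summation by parts) lets us convert the bound on partial sums into a bound on the weighted sums. Writing $A_m = \sum_{k=0}^m a_k$ and $C_m = \sum_{k=0}^m c_k$, summation by parts gives
\[
\sum_{k=0}^{n} b_k^{-s-1} a_k = \sum_{k=0}^{n-1} A_k \bigl(b_k^{-s-1} - b_{k+1}^{-s-1}\bigr) + A_n b_n^{-s-1},
\]
and since the differences $b_k^{-s-1} - b_{k+1}^{-s-1}$ are all non-negative and $A_m \leq C_m$, the right-hand side is bounded above by the corresponding expression with $C_m$ replacing $A_m$, which on re-summing equals $\sum_{k=0}^{n} b_k^{-s-1} c_k$. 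Multiplying by $-s$ and inserting into the previous display produces \eqref{zeta}.

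For the concluding assertion, I would treat the right-hand side of \eqref{zeta} as a sum of functions of $b_k$ alone: for fixed $c > 0$, the map $b \mapsto (s+1) b^{-s} - s b^{-s-1} c$ has derivative $s(s+1) b^{-s-2}(c-b)$, hence attains its maximum on $(0,\infty)$ at $b = c$, with maximum value $c^{-s}$. If $(c_k)$ is positive and non-decreasing, then setting $b_k = c_k$ is admissible and optimal term-by-term, reducing \eqref{zeta} to $\sum a_k^{-s} \geq \sum c_k^{-s}$. The only real obstacle is correctly routing the partial-sum hypothesis \eqref{auxbound} through the Abel argument---once the roles of the monotonicity of $(b_k)$ and the convexity of $x^{-s}$ are identified, everything else is bookkeeping.
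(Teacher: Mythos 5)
Your proof is correct, and it packages the argument differently from the paper. The paper starts from the integral representation $\lambda^{-s} = s(s+1)\int_0^\infty \alpha^{-s-2}[\alpha-\lambda]_+\,d\alpha$, writes $\sum_k (a_k^{-s}-b_k^{-s})$ as an integral of $\sum_k([\alpha-a_k]_+-[\alpha-b_k]_+)$, and then uses the monotonicity of $(b_k)$ to observe that for each $\alpha$ the index set $\{k : \alpha \geq b_k\}$ is an initial segment, so that the partial-sum hypothesis \eqref{auxbound} can be applied level by level before integrating back. Your route --- the tangent-line inequality $a^{-s}\geq (s+1)b^{-s}-sb^{-s-1}a$ from convexity, followed by Abel summation against the non-increasing weights $b_k^{-s-1}$ --- is mathematically equivalent (integrating the paper's first pointwise inequality recovers exactly the tangent-line bound, and the initial-segment observation is summation by parts in disguise), but it is more elementary in that it avoids the integral representation entirely and makes the two ingredients (convexity of $x^{-s}$, monotone rearrangement of the partial-sum hypothesis) cleanly visible. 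What the paper's formulation buys in exchange is reusability: the same layer-cake identity \eqref{powerrep} is deployed again, essentially verbatim, in the proof of Theorem~\ref{power1}, where the comparison object is a single number rather than a sequence and a term-by-term convexity argument would not apply directly. Your treatment of the final maximization claim coincides with the paper's.
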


An examination of the proof shows that if we want \eqref{zeta} to hold for some fixed $n\geq 0$, then for the proof to work we 
need \eqref{auxbound} to hold for all $0 \leq m \leq n$.

\begin{proof}
For $\lambda>0$, we use the identity, valid for all $s>0$,
\begin{equation}
\label{powerrep}
	\lambda^{-s} = s(s+1)\dint_0^\infty \alpha^{-s-2}[\alpha-\lambda]_+\dx{\alpha}.
\end{equation}
Hence for $n\geq 0$, $s>0$ arbitrary,
\begin{displaymath}
\begin{split}
\sum_{k=0}^n \left(a_k^{-s}-b_k^{-s}\right) &= s(s+1)\dint_0^\infty \alpha^{-s-2}
	\dsum_{k=0}^n \left([\alpha-a_k]_+-[\alpha-b_k]_+\right)\dx{\alpha}\\
	&\geq s(s+1)\dint_0^\infty \alpha^{-s-2}\dsum_{k=0}^n [b_k-a_k]_{\alpha\geq b_k}\dx{\alpha}\\
	&\geq s(s+1)\dint_0^\infty \alpha^{-s-2}\dsum_{k=0}^n [b_k-c_k]_{\alpha\geq b_k}\dx{\alpha}\\
	&=\dsum_{k=0}^n s(s+1)(b_k-c_k)\dint_{a_k}^\infty \alpha^{-s-2}\dx{\alpha},
\end{split}
\end{displaymath}
which after simplification and rearrangement gives us \eqref{zeta}. For the maximizing property we consider each term on the right-hand side of \eqref{zeta} as a function of $b_k$
\begin{displaymath}
	g_k(b_k):= (s+1)(b_k)^{-s}-s(b_k)^{-s-1}c_k.
\end{displaymath}
Differentiating in $b_k$ shows that $g_k$ reaches its unique maximum when $b_k=c_k$.
\end{proof}

\begin{proof}[Proof of Proposition~\ref{power1a}]
Lemma~\ref{negs} may be applied directly to prove Proposition~\ref{power1a} in the obvious way; for \eqref{1a1}, it merely remains to be confirmed that the sequence
\begin{displaymath}
\left\{\lambda_k^0 + \frac{\omega_k-\omega_{k-1}}{\pi}\dint_\R q(x)\dx{x}\right\}_{k\in\N}
\end{displaymath}
is positive and non-decreasing. Now since $\lambda_{k+1}^0-\lambda_k^0=2$ for all $k\geq 0$, we need $\int_\R q(x)\dx{x}$ (which we assume to be nonzero and hence strictly positive) to be small enough that
\begin{displaymath}
	\omega_{k+2}-2\omega_{k+1}+\omega_k \geq -\frac{2\pi}{\int_\R q(x)\dx{x}}
\end{displaymath}
for all $k\geq 0$. If $k$ is even, then the left-hand side is identically zero, as follows from the proof of Lemma~\ref{phoseqs}.
Otherwise, for $k+1$ odd, we have
\begin{displaymath}
	\omega_{k+3}-2\omega_{k+2}+\omega_{k+1} = \frac{(k+3)C_k}{2(k+2)(k+4)} - \frac{C_k}{2(k+2)},
\end{displaymath}
which, using the definition of $C_k$, may be rearranged to give
\begin{displaymath}
	-\frac{\sqrt{\pi}}{2(k+4)}\cdot\frac{k+1}{k+2}\cdot\frac{k-1}{k-2}\cdots\frac{3}{4}\cdot\frac{1}{2},
\end{displaymath}
which we see is negative and increasing in $k+1\geq 1$ odd. Thus $\omega_{k+3}-2\omega_{k+2}-\omega_{k+1}$ reaches its largest negative value, namely $-C_0/16 = -\sqrt{\pi}/16$, when $k=0$. The requirement on $q(x)$ is therefore that
\begin{displaymath}
	-\frac{\sqrt{\pi}}{16} \geq -\frac{2\pi}{\int_\R q(x)\dx{x}},
\end{displaymath}
that is, we have shown the required sequence is increasing when $\int_\R q(x)\dx{x} \leq 32\sqrt{\pi}$.
\end{proof}

\begin{proof}[Proof of Theorem~\ref{power1}]
To prove \eqref{11} we use a similar idea to the one in Lemma~\ref{negs}, but since the right-hand side of \eqref{maineq}
is not a sequence, the method needs to be adapted slightly to this situation. Namely, starting with the representation \eqref{powerrep} of $\lambda =: \lambda_k - \lambda_k^0$,
\begin{equation}
\label{11proof}
\sum_{k=0}^n (\lambda_k-\lambda_k^0)^{-s} \geq s(s+1)\int_0^\infty \alpha^{-s-2} \sum_{k=0}^n 
	[\alpha - \lambda_k + \lambda_k^0]_{\alpha\geq M} \dx{\alpha}
\end{equation}
for all $M \in \R$; we make the choice $M:= \frac{\omega_n}{(n+1)\pi}\int_\R q(x)\dx{x}$. Using \eqref{maineq}, which, when rearranged, says that
\begin{equation}
\label{rearr}
\sum_{k=0}^n (\lambda_k-\lambda_k^0) \leq \frac{\omega_n}{\pi}\int_\R q(x)\dx{x},
\end{equation}
we have
\begin{displaymath}
\sum_{k=0}^n [\alpha - \lambda_k + \lambda_k^0]_{\alpha\geq \frac{\omega_n}{(n+1)\pi}\int_\R q(x)\dx{x}}
	\geq (n+1)[\alpha-\frac{\omega_n}{(n+1)\pi}\int_\R q(x)\dx{x}]_+.
\end{displaymath}
Substituting this into \eqref{11proof} and applying \eqref{powerrep} yields \eqref{11}.
\end{proof}

\begin{proof}[Proof of Theorem~\ref{powerzeromean}]
This follows directly from Theorem~\ref{mainth2} and Lemma~\ref{negs}, where we take $a_k = \lambda_k$, $b_k = \lambda_k^0$ and $c_k = \lambda_k^0 + (\varepsilon_k - \varepsilon_{k-1})q_m$ (with $\varepsilon_{-1}:=0$).
\end{proof}


\begin{thebibliography}{99999}
\def\cprime{$'$}

\bibitem[AS]{abrasteg}
M. Abramowitz and I.A. Stegun,
Handbook of mathematical functions with formulas, graphs, and mathematical tables.
National Bureau of Standards Applied Mathematics Series {\bf 55}, $9^{\rm th}$ printing,
1970.

\bibitem[ABP]{abp}
E.V. Aleksandrova, O.V. Bochkareva and V.E. Podol{\cprime}ski\u{\i},
Summation of regularized traces of the singular {S}turm-{L}iouville operator,
Differential Equations {\bf 33} (1997), 287--291.

\bibitem[B]{band}
C. Bandle,
Isoperimetric inequalities and their applications,
Monographs and Studies in Mathematics vol. 7,
Pitman, Boston, Mass., 1980.

\bibitem[D]{dikii}
L.A. Diki\u{\i},
On a formula of Gel{\cprime}fand--Levitan (Russian),
Uspehi Matem. Nauk (N.S.) {\bf 8}, (1953), 119--123.

\bibitem[FK]{frke}
P. Freitas and J.B. Kennedy,
Summation formula inequalities for eigenvalues of {S}chr\" odinger operators,
to appear in J. Spectral Theory.

\bibitem[LS]{lesa}
B.M. Levitan and I.S. Sargsjan,
Sturm-Liouville and Dirac operators,
Mathematics and its Applications {\bf 59},
Kluwer Academic Publishers Group, Dordrecht, 1991.

\bibitem[PS]{puso}
A. Pushnitski and I. Sorrell,
High energy asymptotics and trace formulas for the perturbed harmonic oscillator,
Ann. Henri Poincar\'{e} {\bf 7} (2006), 381--396.

\bibitem[SP]{sapo}
V.A. Sadovnichi\u{\i} and V.E. Podol{\cprime}ski\u{\i},
Traces of operators, Uspekhi Mat. Nauk {\bf 61} (2006), 89--156;
translation in Russian Math. Surveys {\bf 61} (2006), 885--953.

\bibitem[S1]{szeg1}
G. Szeg\H{o},
On an inequality of {P}.~{T}ur\' an concerning {L}egendre polynomials,
Bull. Amer. Math. Soc. {\bf 54} (1948), 401--405.

\bibitem[S2]{szeg}
G. Szeg\H{o},
Orthogonal Polynomials, Fourth ed., AMS Colloquium Publications, Vol. XXIII,
Providence, R.I., 1975.

\bibitem[T]{titc}
E.C. Titchmarsh,
The theory of the {R}iemann zeta-function, 2nd ed.,
The Clarendon Press, Oxford University Press, New York, N.Y., 1986.

\end{thebibliography}
\end{document}